\documentclass[11pt]{article}
\usepackage{amssymb,amsthm}
\usepackage[margin=1in]{geometry}
\usepackage{indentfirst}
\usepackage{amsmath}
\newtheorem{theorem}{Theorem}[section]
\newtheorem{lemma}[theorem]{Lemma}

\newtheorem{claim}[theorem]{Claim}
\newtheorem{corollary}[theorem]{Corollary}

\newtheorem{conjecture}[theorem]{Conjecture}

\newcommand{\rhat}{\widehat r}
\newcommand{\sq}{\sqcup}

\title{Size Ramsey numbers of star forests with two components on one side}
\date{}

\author{{Chunlin You\footnote{School of Mathematics and Statistics, Yancheng Teachers University, Yancheng
			224002, P.~R.~China. E-mail: {\tt chunlin\_you@163.com}.
			Supported by the National Natural Science Foundation of China
			(No. 12401469) and the Young Elite Scientist Sponsorship Program by JSAST (JSTJ-2025-892).}}}

\begin{document}
\maketitle

\begin{abstract}
We determine the size Ramsey number of a two-component star forest
versus an arbitrary star forest.  Let $q,a,b,n_1,\ldots,n_q$ be positive
integers with $a\ge b$ and $n_1\ge\cdots\ge n_q$.  We prove that
\[
\begin{split}
\rhat\left(K_{1,a}\sq K_{1,b},
 \bigsqcup_{i=1}^{q}K_{1,n_i}\right)
={}&(a+n_1-1)
+\sum_{i=1}^{q-1}\max\{a+n_{i+1}-1,b+n_i-1\}\\
&+(b+n_q-1).
\end{split}
\]
This proves the conjecture of Burr, Erd\H{o}s, Faudree, Rousseau and Schelp
(Erd\H{o}s Problem~\#561) in the case where one of the two star forests
has exactly two components.  
The formula holds without any restriction on the parity of the star sizes.
The proof combines vertex deletion, iterated matching deletion, and Lovász's factor theorem.

\medskip
\noindent\textbf{Keywords:} Size Ramsey number; Star forest; Graph factor; Matching
\end{abstract}

\section{Introduction}

Throughout this paper, all graphs are finite and simple.  For graphs
$G,F$, and $H$, we write $G\to(F,H)$ if every red-blue coloring of $E(G)$ contains
a red copy of $F$ or a blue copy of $H$.  Erd\H{o}s, Faudree,
Rousseau and Schelp~\cite{ErdosEtAl1978} introduced the size Ramsey
number of $F$ and $H$, defined by
\[
 \rhat(F,H)=\min\{e(G):G\to(F,H)\}.
\]
Beck~\cite{Beck1983} proved that $\rhat(P_n,P_n)=O(n)$,
where $P_n$ denotes the path on $n$ vertices, answering a question
of Erd\H{o}s. For further results on size Ramsey numbers, see the
survey of Faudree and Schelp~\cite{FaudreeSchelp2002}.

It is easy to see that $\rhat(K_{1,n},K_{1,m})=n+m-1$
for all positive integers $n$ and $m$. Burr et al.~\cite{BurrEtAl1978} determined
the size Ramsey number of two star forests when all components
within each forest are isomorphic. They conjectured that the
following formula holds for arbitrary star forests.

\begin{conjecture}[Burr et al.~\cite{BurrEtAl1978};
Erd\H{o}s Problem~\#561\protect\footnotemark]
\label{conj:general-star-forest}
Let $s,t$ be positive integers, and let
\[
 n_1\ge\cdots\ge n_s\ge1,
 \qquad
 m_1\ge\cdots\ge m_t\ge1
\]
be sequences of positive integers. Then
\[
\rhat\left(\bigsqcup_{i=1}^{s}K_{1,n_i},
            \bigsqcup_{j=1}^{t}K_{1,m_j}\right)
=\sum_{k=2}^{s+t}\ell_k,
\qquad
\ell_k=\max_{i+j=k}\{n_i+m_j-1\}.
\]
\end{conjecture}
\footnotetext{T.~F. Bloom, Erd\H{o}s Problem \#561,
\texttt{https://www.erdosproblems.com/561}.}


Several cases of Conjecture~\ref{conj:general-star-forest} are known.
Burr et al.~\cite{BurrEtAl1978} proved that
$\rhat(sK_{1,n},tK_{1,m})=(s+t-1)(n+m-1)$ for positive integers
$s,t,n,m$.
Gy\H{o}ri and Schelp~\cite{GyoriSchelp2002} proved that
\[
 \rhat\left(\bigsqcup_{i=1}^{s}K_{1,n_i},
            \bigsqcup_{j=1}^{t}K_{1,m_j}\right)
 =\sum_{k=2}^{s+t}\ell_k
\]
provided that
\[
 \binom{\ell_k}{2}>\sum_{i=k}^{s+t}\ell_i
 \qquad (2\le k\le s+t).
\]
Thus, their result verifies the conjecture when the anti-diagonal terms
satisfy the preceding inequalities.
Cheng~\cite{Cheng2010} studied the case where each star forest is
the disjoint union of a star and a matching.

Davoodi, Javadi, Kamranian and Raeisi~\cite{DavoodiEtAl2025} proved the
one-star case and the two-equal-star case when every component of the
other forest has at least two edges, as well as the case where all
component sizes in both forests are odd.  They also treated a uniform
star forest against a non-uniform star forest under an additional parity
assumption.  More precisely, if
$m_1\ge\cdots\ge m_t\ge2$ and both $n$ and $m_1$ are odd, then
\[
 \rhat\left(sK_{1,n},
            \bigsqcup_{j=1}^{t}K_{1,m_j}\right)
 =(s-1)(n+m_1-1)
  +\sum_{j=1}^{t}(n+m_j-1).
\]

Zhang~\cite{Zhang1992} determined the multicolor size Ramsey numbers
of uniform star forests. Fu, Luo and Ni~\cite{FuLuoNi2026v3}
characterize the size Ramsey minimal graphs for multicolor tuples
of uniform star forests.

We prove Conjecture~\ref{conj:general-star-forest} when one of the
two star forests has exactly two components. This extends the
two-equal-star case of Davoodi et al.~\cite{DavoodiEtAl2025} to two
stars of arbitrary sizes, with an arbitrary star forest as the other
target. In particular, the other forest may contain copies of $K_2$.
No parity condition is imposed.

In the two-component case, write the component sizes as $a\ge b\ge1$
on one side and $n_1\ge\cdots\ge n_q\ge1$ on the other.
Conjecture~\ref{conj:general-star-forest} then takes the form given in
the following theorem.

\begin{theorem}\label{thm:main}
Let $q,a,b,n_1,\ldots,n_q$ be positive integers with
$a\ge b$ and $n_1\ge\cdots\ge n_q$. Then
\begin{equation}\label{eq:main-formula}
\begin{split}
\rhat\left(K_{1,a}\sq K_{1,b},
 \bigsqcup_{i=1}^{q}K_{1,n_i}\right)
={}&(a+n_1-1)
+\sum_{i=1}^{q-1}
 \max\{a+n_{i+1}-1,b+n_i-1\}\\
&+(b+n_q-1).
\end{split}
\end{equation}
When $q=1$, the sum is empty.
\end{theorem}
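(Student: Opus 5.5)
We prove the upper and lower bounds separately. Write $F=K_{1,a}\sq K_{1,b}$ (red) and $H=\bigsqcup_{i=1}^q K_{1,n_i}$ (blue), and let $\ell_2,\ldots,\ell_{q+2}$ be the anti-diagonal terms, so that $\ell_2=a+n_1-1$, $\ell_{k}=\max\{a+n_{k-1}-1,\,b+n_{k-2}-1\}$ for $3\le k\le q+1$, and $\ell_{q+2}=b+n_q-1$.

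\emph{Upper bound.} We take $G$ to be the disjoint union of the stars $K_{1,\ell_2},\ldots,K_{1,\ell_{q+2}}$; this standard construction has exactly $\sum_k\ell_k$ edges. Fix a coloring of $G$ with no red $F$. If some star has at least $a$ red edges, then every other star has fewer than $b$ red edges. Otherwise every star has fewer than $a$ red edges. We then try to embed blue $K_{1,n_i}$ greedily: the $q+1$ stars are assigned to the $q$ targets so that the star $K_{1,\ell_{k}}$ serves $n_{k-1}$ or $n_{k-2}$, depending on where the (at most one) "rich" red star sits. Concretely, if the rich star is the $j$-th one, then the stars before it serve $n_{k-1}$ via $\ell_k\ge a+n_{k-1}-1$, and the stars after it serve $n_{k-2}$ via $\ell_k\ge b+n_{k-2}-1$. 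This is a routine check.

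\emph{Lower bound.} This is the main work. Let $G\to(F,H)$; we must show $e(G)\ge\sum_k\ell_k$, and we argue by induction on $q$ (and on $a+b$). The first step is vertex deletion. Let $v$ be a vertex of maximum degree. If $\deg v\ge a+n_1-1$, we color all edges at $v$ blue except $a-1$, or red as appropriate, and delete $v$. The remaining graph must arrow a smaller pair, namely $(F,\,H-K_{1,n_1})$ or $(K_{1,b},\,H)$, and the induction hypothesis together with the one-star case gives the bound. This requires comparing $\ell$-sums, which is a telescoping inequality using monotonicity of the $n_i$.

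If instead $\Delta(G)\le a+n_1-2$, we must produce a coloring avoiding both red $F$ and blue $H$ unless $G$ is large. The plan is iterated matching deletion: repeatedly remove a maximal matching, colored red so that red degrees stay at most $1$ and no red $K_{1,b}$ appears when $b\ge2$. Alternatively, we use Lovász's $(g,f)$-factor theorem to find a spanning subgraph $R$ with $\deg_R(v)\le a-1$ everywhere and $\deg_R(v)\ge \deg_G(v)-n_q+1$ at all but few vertices, so that blue degrees are below $n_q$ except at a controlled set of vertices. The red graph is then arranged to contain no $K_{1,a}$ and at most one vertex of red degree at least $b$. Counting the blue-heavy vertices gives a blue star forest whose component count is bounded by an edge count. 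The existence of the factor follows from Lovász's criterion once $e(G)<\sum\ell_k$, since each deficient obstruction set $S,T$ forces roughly $\ell$ edges per vertex in it.

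\emph{Main obstacle.} The hardest step will be verifying Lovász's condition in the bounded-degree case, especially when $n_q=1$ (blue $K_2$ components) and when $a\ne b$. In these cases parity-type deficiencies of the odd components in the factor theorem appear; this is exactly where earlier work needed odd sizes. We first try to absorb each odd component by allowing one exceptional vertex of red degree between $b$ and $a-1$, using the single allowed red $K_{1,a}$ "slack" from the two-component structure, and then charge the remaining deficiency against the $\max$ terms in $\ell_k$.
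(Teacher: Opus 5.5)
Your upper bound is the paper's construction and is fine. The lower bound, however, has two genuine gaps.

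\textbf{The high-degree step.} This step does not work as described. If $d_G(v)\ge a+n_1-1$, you need
$$e(G-v)\ge \sum_{i=1}^{q-1}\max\{a+n_{i+1}-1,b+n_i-1\}+(b+n_q-1).$$
Neither relation $G-v\to(F,H-K_{1,n_1})$ nor $G-v\to(K_{1,b},H)$ gives this bound by itself, and no telescoping inequality repairs that. For example, take $a=10$, $b=1$ and $(n_1,n_2,n_3)=(100,50,45)$. The required bound is $100+54+45=199$. The induction hypothesis applied to the first relation gives only $59+54+45=158$, and the one-star bound applied to the second gives only $e(H)=195$.

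Your derivation of these relations is also off. Coloring the edges at $v$ ``blue except $a-1$'' yields neither relation; one colors all of them blue, respectively all of them red.

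You must use both relations \emph{simultaneously}, and proving the resulting bound is a separate induction. In the paper, for $b=1$ one deletes a vertex of degree at least $n_1$ (forced by the all-blue coloring in the second relation) and recurses on $q$. For $b\ge2$ one deletes a maximum matching colored red, which lowers $a$ and $b$ by one.

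\textbf{The bounded-degree case.} This case, which is where the real content lies, contains no working argument.

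\begin{itemize}
\item You miss the reduction. Vizing's theorem disposes of $\Delta(G)\le a+n_1-3$. When $\Delta(G)=a+n_1-2$, the corrected vertex-deletion bound already gives $e(G)\ge L-1$. So any counterexample has exactly $L-1$ edges, and it suffices to split $E(G)$ into two parts of maximum degree at most $a-1$ and at most $n_1-1$.
\item Your coloring target is not this one. You aim for blue degree below $n_q$ off a controlled set and at most one vertex of red degree $\ge b$.
\item The slack you propose for absorbing odd components is vacuous. That slack is one vertex of red degree between $b$ and $a-1$. But red degree at most $a-1$ everywhere already excludes a red $K_{1,a}$, and hence a red $F$.
\item Above all, the claim that Lov\'asz's condition holds ``once $e(G)<\sum\ell_k$'' is unsupported. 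The parity term $q_D(S,T)$ is not controlled by an edge count.
\end{itemize}

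The paper controls $q_D(S,T)$ only after extracting considerable structure:
\begin{itemize}
\item It rules out $b\ge2$ using maximum-matching deletion, Gallai's lemma (factor-critical components), and $\rhat(K_{1,a}\sq K_{1,b},rK_2)=ra+b$.
\item For $b=1$ it shows $n_q=1$ and $\nu(G)=q+k$ with $1\le k\le\rho$.
\item It decomposes $E(G)$ into $a-1$ matchings, a copy $H_0$ of $H$, and a remainder $R$ with $e(R)\le\rho-k$.
\item From this it deduces that there are two gaps $n_i-n_{i+1}\ge a$, that every vertex of degree $D$ is a center of $H_0$, and that $a\ge3$.
\end{itemize}
Only with these facts does a component counted by $q_D(S,T)$ force more boundary edges than $A(S,T)$ allows. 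None of these ingredients appears in your proposal, so the lower bound remains unproved.
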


The upper bound follows from the standard off-diagonal construction.  
For the lower bound, we use lexicographic induction, an iterated
decomposition obtained by deleting matchings, and Lov\'asz's factor theorem.  
When the two stars have different sizes, we also need the boundary estimate
from Claim~\ref{clm:certificate-boundary}.

In Section~2, we collect the preliminary results used in the proof.
In Section~3, we prove the main theorem. In Section~4, we conclude
with some remarks.

\section{Preliminary results}

For a graph \(G\), let \(V(G)\) and \(E(G)\) denote its vertex set and
edge set, respectively, and set \(e(G)=|E(G)|\).
For \(v\in V(G)\), let \(d_G(v)\) be the degree of \(v\) in \(G\),
and let \(\Delta(G)\) be the maximum degree of \(G\).
We write \(\nu(G)\) for the matching number of \(G\), that is,
the maximum size of a matching in \(G\).
For \(X\subseteq V(G)\), let \(G[X]\) denote the subgraph of \(G\)
induced by \(X\), and write \(G-X=G[V(G)\setminus X]\).
In particular, \(G-v=G-\{v\}\).
The symbol \(\sqcup\) denotes disjoint union, and \(tF\) denotes
the disjoint union of \(t\) copies of a graph \(F\).
We write \(K_{1,n}\) for the star with \(n\) edges.

A graph \(G\) is \emph{factor-critical} if \(G-v\) has a perfect
matching for every \(v\in V(G)\).
We will use the following form of Gallai's lemma.

\begin{lemma}[Gallai~\cite{LovaszPlummer1986}]
\label{lem:gallai}
Let $G$ be a connected graph.  If
\[
 \nu(G-v)=\nu(G)\qquad(v\in V(G)),
\]
then $G$ is factor-critical.
\end{lemma}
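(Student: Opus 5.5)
The plan is the classical argument via a distance-minimal counterexample. The hypothesis says that for every vertex $w$ there is a maximum matching of $G$ missing $w$: a maximum matching of $G-w$ has size $\nu(G-w)=\nu(G)$ and is a maximum matching of $G$ avoiding $w$. It therefore suffices to prove the following.

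\begin{claim}
Every maximum matching of $G$ misses at most one vertex.
\end{claim}

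Granting the claim, fix any $v$ and a maximum matching missing $v$. By the claim it misses no other vertex, so it is a perfect matching of $G-v$. Hence $G$ is factor-critical.

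To prove the claim, suppose not. Among all triples $(M,u,v)$ with $M$ a maximum matching missing two distinct vertices $u,v$, choose one minimizing $d(u,v)$; this uses connectivity of $G$. If $uv\in E(G)$, then $M+uv$ is a larger matching, which is impossible. So $d(u,v)\ge2$. Let $w$ be an interior vertex of a shortest $u$--$v$ path. Then $d(u,w)<d(u,v)$ and $d(w,v)<d(u,v)$.

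By the first observation there is a maximum matching $N$ missing $w$. The minimality of $d(u,v)$ gives the following facts.
\begin{itemize}
\item $M$ covers $w$; otherwise $(M,u,w)$ would be a better triple.
\item $N$ covers both $u$ and $v$; otherwise $(N,w,u)$ or $(N,w,v)$ would be a better triple.
\end{itemize}

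Now consider $M\triangle N$, whose components are alternating paths and cycles. The vertices $w$, $u$ and $v$ each have degree one in $M\triangle N$. So $w$ is an endpoint of a path $P$ that starts with an $M$-edge at $w$. Since $P$ has only two endpoints, at least one of $u,v$ is not on $P$; by symmetry say $u\notin V(P)$.

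The path $P$ has even length. If it had odd length, it would begin and end with $M$-edges, so it would be $N$-augmenting and contradict the maximality of $N$.

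Set $M'=M\triangle P$. Because $P$ has even length, $M'$ is a matching with $|M'|=|M|$, so it is maximum. It misses $w$, since the only $M$-edge at $w$ lies on $P$. It still misses $u$, since $u\notin V(P)$. Thus $(M',u,w)$ is a triple with $d(u,w)<d(u,v)$, contradicting the choice of $(M,u,v)$. This proves the claim and hence the lemma.

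The delicate step is the case analysis on $M\triangle N$. One must check that $w$, $u$ and $v$ are all endpoints of components of $M\triangle N$, and that the path through $w$ has even length, so that switching along it preserves maximality. Everything else is bookkeeping.
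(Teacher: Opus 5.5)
Your proof is correct. The paper gives no proof of this lemma; it is quoted from Lov\'asz--Plummer and used as a black box, applied componentwise in Claim~\ref{clm:reduction-b-one}. So your argument is a self-contained substitute rather than an alternative to anything in the text.

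It is the classical distance-minimality argument, with one variation worth noting. You flip $M$ along the $(M\triangle N)$-path $P$ that starts at $w$. This produces a maximum matching that misses $w$ and one of $u,v$, which directly contradicts the minimality of $d(u,v)$. Another common version instead chooses $N$ to maximize $|M\cap N|$, uses $|M|=|N|$ to find a second $N$-exposed vertex covered by $M$, and flips $N$. Your route dispenses with that second extremal choice and the counting step.

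Two small points deserve a sentence each in a write-up:
\begin{itemize}
\item $M\triangle P$ is a matching for any component $P$ of $M\triangle N$. Even length is only what gives $|M\triangle P|=|M|$.
\item In the odd-length case, the far endpoint of $P$ is $N$-exposed, because its unique edge in $M\triangle N$ is an $M$-edge. This is what makes $P$ an $N$-augmenting path.
\end{itemize}
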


Since the matching number is additive over components, the condition
$\nu(G-v)=\nu(G)$ for every $v\in V(G)$ implies
$\nu(C-v)=\nu(C)$ for every component $C$ and every $v\in V(C)$.
Thus Lemma~\ref{lem:gallai} applies to each component $C$, so $C$ is
factor-critical and $|V(C)|=2\nu(C)+1$.

For a nonempty non-increasing sequence
$\boldsymbol{x}=(x_1,\ldots,x_r)$ of positive integers, define
 $\mathcal K(\boldsymbol{x})
 =\bigsqcup_{i=1}^{r}K_{1,x_i}.$
If $r\ge2$, write
\[
 \boldsymbol{x}^{+}=(x_2,\ldots,x_r);
 \qquad
 \mathcal K(\boldsymbol{x}^{+})
 =\bigsqcup_{i=2}^{r}K_{1,x_i}.
\]
Thus, $\mathcal K(\boldsymbol{x}^{+})$ is obtained from
$\mathcal K(\boldsymbol{x})$ by deleting its largest star component.

We shall repeatedly use the following vertex-deletion reduction.

\begin{lemma}\label{lem:deletion}
Let $\boldsymbol{x}$ and $\boldsymbol{y}$ be nonempty non-increasing
sequences of positive integers. If
\[
G\to\bigl(\mathcal K(\boldsymbol{x}),\mathcal K(\boldsymbol{y})\bigr),
\]
then, for every $v\in V(G)$,
\[
G-v\to\bigl(\mathcal K(\boldsymbol{x}^{+}),\mathcal K(\boldsymbol{y})\bigr)
\quad\text{and}\quad
G-v\to\bigl(\mathcal K(\boldsymbol{x}),\mathcal K(\boldsymbol{y}^{+})\bigr),
\]
provided that $\boldsymbol{x}^{+}$ and $\boldsymbol{y}^{+}$ are nonempty,
respectively.
\end{lemma}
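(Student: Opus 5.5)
We prove the first assertion; the second follows by swapping the roles of the two colors, since $G\to(F,H)$ is equivalent to $G\to(H,F)$ with red and blue exchanged. Fix $v\in V(G)$ and suppose, for contradiction, that some red-blue coloring $c$ of $E(G-v)$ contains no red copy of $\mathcal K(\boldsymbol{x}^{+})$ and no blue copy of $\mathcal K(\boldsymbol{y})$. Extend $c$ to a coloring $c'$ of $E(G)$ by coloring every edge incident with $v$ red. Since $G\to\bigl(\mathcal K(\boldsymbol{x}),\mathcal K(\boldsymbol{y})\bigr)$, the coloring $c'$ contains a red copy of $\mathcal K(\boldsymbol{x})$ or a blue copy of $\mathcal K(\boldsymbol{y})$.

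A blue copy of $\mathcal K(\boldsymbol{y})$ in $c'$ uses no edge at $v$, because all such edges are red. It therefore lies in $G-v$ and is blue under $c$, which is impossible. Hence $c'$ contains a red copy $R$ of $\mathcal K(\boldsymbol{x})=\bigsqcup_{i=1}^{r}K_{1,x_i}$. The key step is to discard the star component of $R$ that contains $v$, if there is one. If $v\notin V(R)$, then $R$ lies in $G-v$, so deleting its component $K_{1,x_1}$ leaves a red copy of $\mathcal K(\boldsymbol{x}^{+})$ under $c$, a contradiction. If $v\in V(R)$, then $v$ lies in exactly one component $K_{1,x_j}$ of $R$. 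Deleting that component leaves a red copy of $\bigsqcup_{i\ne j}K_{1,x_i}$ that avoids $v$ and so lies in $G-v$.

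It remains to check that $\bigsqcup_{i\ne j}K_{1,x_i}$ contains $\mathcal K(\boldsymbol{x}^{+})$. This is the one point that needs care, and it uses the monotonicity of $\boldsymbol{x}$. The remaining multiset is $\{x_1,\ldots,x_r\}\setminus\{x_j\}$. Match $x_{i+1}$ with $x_i$ for $i<j$, and match $x_i$ with itself for $i>j$. Since $x_i\ge x_{i+1}$, each star of $\mathcal K(\boldsymbol{x}^{+})$ is matched injectively to a remaining red star with at least as many leaves. Taking sub-stars, and noting that the components are vertex-disjoint, we obtain a red copy of $\mathcal K(\boldsymbol{x}^{+})$ in $G-v$ under $c$. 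This contradiction proves the lemma. The hypothesis that $\boldsymbol{x}^{+}$ is nonempty is needed only for the statement to make sense.
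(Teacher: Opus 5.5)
Your proof is correct and follows the paper's argument: color every edge at $v$ red, take the resulting red $\mathcal K(\boldsymbol{x})$, discard the component containing $v$ (or any component if $v$ is avoided), and obtain the second assertion by swapping colors. Your index-shifting bijection spells out the paper's one-line remark that the remaining component sizes are coordinatewise at least $x_2,\ldots,x_r$.
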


\begin{proof}
Suppose that $G-v$ admits a coloring with no red
$\mathcal K(\boldsymbol{x}^{+})$ and no blue
$\mathcal K(\boldsymbol{y})$.  Color every edge incident with $v$ red.
If a red copy of $\mathcal K(\boldsymbol{x})$ is obtained, delete its
component containing $v$, or any one component if the copy avoids $v$.
The remaining component sizes, in non-increasing order, are coordinatewise
at least $x_2,\ldots,x_r$.  Deleting surplus leaf edges therefore leaves
a red copy of $\mathcal K(\boldsymbol{x}^{+})$ in $G-v$, a contradiction.
The second assertion follows by interchanging the colors.
\end{proof}

When $n_q\ge2$, Davoodi et al.~\cite{DavoodiEtAl2025} determined the
one-star formula and the corresponding extremal graphs. Their
matching-deletion argument also proves the formula when some components
are copies of $K_2$. 
We include the argument, which will also be used
to prove Claim~\ref{clm:one-star-stability}.

\begin{lemma}\label{lem:one-star}
Let $p,q,n_1,\ldots,n_q$ be positive integers with
$n_1\ge\cdots\ge n_q$.  Then
\[
 \rhat\left(K_{1,p},
             \bigsqcup_{i=1}^{q}K_{1,n_i}\right)
 =\sum_{i=1}^{q}(p+n_i-1).
\]
\end{lemma}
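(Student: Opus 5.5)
Upper bound: take $G=\bigsqcup_{i=1}^{q}K_{1,p+n_i-1}$, which has $\sum_{i}(p+n_i-1)$ edges. Fix a coloring with no red $K_{1,p}$. Then each star has at most $p-1$ red edges at its center, so the $i$-th star has at least $n_i$ blue edges. These give a blue $\bigsqcup_i K_{1,n_i}$.

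Lower bound: I would use induction on $q$ together with matching deletion. For $q=1$ the claim is the standard fact $\rhat(K_{1,p},K_{1,n_1})=p+n_1-1$. If $e(G)\le p+n_1-2$ and $G$ had a vertex of degree $\ge p+n_1-1$, then $G$ would be exactly a star minus edges, which is impossible. So $\Delta(G)\le p+n_1-2$. Now color red, at each vertex, a spanning subgraph of maximum degree $p-1$ chosen so that the remaining graph has maximum degree $\le n_1-1$. By Lovász's theorem (or simply edge counting: since $\Delta\le p+n_1-2$, $G$ decomposes into subgraphs of maximum degree $\le p-1$ and $\le n_1-1$), this coloring has no red $K_{1,p}$ and no blue $K_{1,n_1}$.

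For general $q$, suppose $G\to(K_{1,p},\mathcal K(\boldsymbol n))$ and $e(G)$ is minimal. The key step is to find a vertex $v$ of degree at least $p+n_1-1$, or more usefully a set of edges to delete. I would argue as follows.

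\begin{itemize}
\item If $\Delta(G)\ge p+n_1-1$, delete such a vertex $v$. By Lemma~\ref{lem:deletion}, $G-v\to(K_{1,p},\mathcal K(\boldsymbol n^{+}))$, so by induction
\[
e(G)\ge (p+n_1-1)+\sum_{i\ge2}(p+n_i-1).
\]
\item Otherwise $\Delta(G)\le p+n_1-2$. Then I would delete a matching $M$ meeting every vertex of degree $\Delta$, repeating as needed. This is the iterated matching deletion, with Lemma~\ref{lem:gallai} used to handle components where no such matching exists: such components are factor-critical, and there a parity and counting argument applies. The aim is to split $E(G)$ as $H_{\mathrm{red}}\cup H_{\mathrm{blue}}$ with $\Delta(H_{\mathrm{red}})\le p-1$ and $\Delta(H_{\mathrm{blue}})\le n_1-1$. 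A blue graph with maximum degree $<n_1$ contains no $K_{1,n_1}$, hence no $\mathcal K(\boldsymbol n)$, which contradicts $G\to(\cdot,\cdot)$.
\end{itemize}

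The main obstacle is exactly this degree splitting when $\Delta(G)\le p+n_1-2$. A graph need not decompose into an $f$-factor and a $g$-factor with prescribed maximum degrees when parity obstructs it. This is where Lovász's $(g,f)$-factor theorem and the factor-critical structure from Gallai's lemma come in. I would first try to peel off $p-1$ matchings one at a time, each covering all current maximum-degree vertices, so that $\Delta$ drops by one each time. The remainder then has maximum degree $\le n_1-1$ and is colored blue.

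A matching covering all maximum-degree vertices fails to exist only in odd factor-critical components, via Tutte/Gallai. In that case I would instead move a single vertex's star into the red class, or use the slack from $e(G)$ being small. This case analysis is where I expect the real work to lie.
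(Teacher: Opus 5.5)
Your upper bound and the first case of your lower bound (deleting a vertex of degree at least $p+n_1-1$ via Lemma~\ref{lem:deletion}) are fine. The second case, however, cannot work as planned. When $\Delta(G)\le p+n_1-2$, you try to reach a contradiction by splitting $E(G)$ into a part of maximum degree at most $p-1$ and a part of maximum degree at most $n_1-1$. Such a split need not exist, and this case really does contain minimum arrowing graphs. Take $p=n_1=n_2=2$. The graph $2C_3$ has $\Delta=2=p+n_1-2$ and $6=\sum_i(p+n_i-1)$ edges. It also satisfies $2C_3\to(K_{1,2},2K_{1,2})$: a red matching uses at most one edge of each triangle, so each triangle contains a blue $K_{1,2}$. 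So no argument based on $\Delta(G)\le p+n_1-2$ can give a contradiction. You would have to use $e(G)<\sum_i(p+n_i-1)$, and the proposal never says how; the ``case analysis where the real work lies'' is exactly the missing proof. The triangle also shows that your base-case remark (``since $\Delta\le p+n_1-2$, $G$ decomposes\ldots'') is false in general. Your base case survives only because, when $e(G)\le p+n_1-2$, you can color any $p-1$ edges red and the rest blue.

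The missing idea is to induct on $p$ rather than $q$, and to use only the \emph{size} of a maximum matching, not which vertices it covers. The all-blue coloring shows that $G$ contains $H$; choosing one edge per star gives $\nu(G)\ge q$. Let $M$ be a maximum matching. Coloring $M$ red extends any coloring of $G-M$ with no red $K_{1,p-1}$ and no blue $H$ to a coloring of $G$ with no red $K_{1,p}$ and no blue $H$. Hence $G-M\to(K_{1,p-1},H)$, and induction gives $e(G)\ge q+\sum_i(p+n_i-2)=\sum_i(p+n_i-1)$. The base case $p=1$ is the all-blue coloring. This is the paper's proof. It needs no matching covering the maximum-degree vertices, no Gallai lemma and no factor theorem. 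The loss of one red degree per step is paid for by counting $|M|\ge q$ at each step, not by forcing $\Delta$ down.
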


\begin{proof}
Let $H=\bigsqcup_{i=1}^{q}K_{1,n_i}$.
For the upper bound, consider the disjoint union of the stars
$K_{1,p+n_i-1}$, $1\le i\le q$.
In any red-blue coloring with no red $K_{1,p}$, the center of the
$i$th star is incident with at least $n_i$ blue edges.
These blue stars form a copy of $H$, proving the upper bound.

We prove the lower bound by induction on $p$.
If $p=1$ and $G\to(K_2,H)$, then the all-blue coloring of $G$ contains
a copy of $H$, so $e(G)\ge e(H)=\sum_{i=1}^{q}n_i$.
Now let $p\ge2$, assume the assertion holds for $p-1$, and let
$G\to(K_{1,p},H)$.
The all-blue coloring shows that $G$ contains a copy of $H$.
Choose one edge from each component of this copy. These edges form a
matching of size $q$, so a maximum matching $M$ of $G$ satisfies $|M|\ge q$.

Let $G-M$ be the spanning subgraph obtained by deleting the edges of $M$.
We claim that
\[
G-M\to(K_{1,p-1},H).
\]
Suppose to the contrary that \(G-M\) has a red/blue coloring with no red \(K_{1,p-1}\) and no blue copy of \(H\). Extend this coloring to \(G\) by coloring every edge of \(M\) red. Since no blue edge is added, \(G\) still contains no blue copy of \(H\). The red subgraph of \(G-M\) has maximum degree at most \(p-2\). Since \(M\) is a matching, each vertex is incident with at most one edge of \(M\). Hence the red subgraph of \(G\) has maximum degree at most \(p-1\), and therefore contains no copy of \(K_{1,p}\). This contradicts \(G\to(K_{1,p},H)\) and proves the claim.
Applying the induction hypothesis to $G-M$ and using $|M|\ge q$, we obtain
\[
e(G)=e(G-M)+|M|
\ge\sum_{i=1}^{q}(p+n_i-2)+q
=\sum_{i=1}^{q}(p+n_i-1).
\]
This proves the lower bound and completes the proof.
\end{proof}

We shall also use the following elementary special case of the
star-forest conjecture, for which we include a short proof.

\begin{lemma}\label{lem:two-stars-matching}
For positive integers $a\ge b$ and $r$,
\[
 \rhat(K_{1,a}\sq K_{1,b},rK_2)=ra+b.
\]
\end{lemma}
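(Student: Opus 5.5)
The plan is to handle the upper bound by an explicit disjoint union of stars and the lower bound by induction on $r$, using the vertex-deletion reduction of Lemma~\ref{lem:deletion} to peel off one $K_2$ component at a time. Note first that the claimed value agrees with the formula in Theorem~\ref{thm:main} when $n_1=\cdots=n_r=1$. In that case every maximum equals $a$, and the sum becomes $a+(r-1)a+b=ra+b$.

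\emph{Upper bound.} Let $G$ be the disjoint union of $r$ copies of $K_{1,a}$ and one copy of $K_{1,b}$, so that $e(G)=ra+b$. Fix a red-blue coloring of $G$ with no blue $rK_2$.
\begin{itemize}
\item Choosing one blue edge from each star that contains a blue edge gives a blue matching. Hence at most $r-1$ of the $r+1$ stars contain a blue edge.
\item So at least two stars are entirely red.
\item Only one star is a $K_{1,b}$, so at least one of these two red stars is a $K_{1,a}$. The other has at least $b$ edges, since $a\ge b$.
\item These two vertex-disjoint red stars contain a red $K_{1,a}\sq K_{1,b}$.
\end{itemize}
Thus $G\to(K_{1,a}\sq K_{1,b},rK_2)$.

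\emph{Lower bound, base case $r=1$.} If $G\to(K_{1,a}\sq K_{1,b},K_2)$, then the all-red coloring forces $G$ to contain $K_{1,a}\sq K_{1,b}$. Hence $e(G)\ge a+b$.

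\emph{Lower bound, inductive step $r\ge2$.} Let $G\to(K_{1,a}\sq K_{1,b},rK_2)$.
\begin{itemize}
\item The all-red coloring shows that $G$ contains a copy of $K_{1,a}$. So there is a vertex $v$ with $d_G(v)\ge a$.
\item Apply the second assertion of Lemma~\ref{lem:deletion} with $\boldsymbol{y}=(1,\ldots,1)$ of length $r$; here $\boldsymbol{y}^{+}$ is nonempty because $r\ge2$. This gives $G-v\to(K_{1,a}\sq K_{1,b},(r-1)K_2)$.
\item By the induction hypothesis, $e(G-v)\ge (r-1)a+b$.
\end{itemize}
Therefore
\[
e(G)=e(G-v)+d_G(v)\ge (r-1)a+b+a=ra+b.
\]

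There is no serious obstacle here. The only point needing care is to delete a vertex of degree at least $a$ rather than an arbitrary vertex, since an arbitrary choice would only recover $d_G(v)\ge 1$ per step. The existence of such a vertex is guaranteed by the all-red coloring.
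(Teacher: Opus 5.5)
Your proposal is correct and follows essentially the paper's proof: the same upper-bound construction $rK_{1,a}\sqcup K_{1,b}$, and the same induction on $r$ deleting a vertex of degree at least $a$ via Lemma~\ref{lem:deletion}. The only difference is cosmetic: the paper takes a vertex of maximum degree and notes $\Delta(G)\ge a$, whereas you take any vertex of degree at least $a$.
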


\begin{proof}
For the upper bound, take $Q=rK_{1,a}\sqcup K_{1,b}$.
In a coloring with no blue $rK_2$, at most $r-1$ components of $Q$
contain a blue edge. At least two components are therefore entirely
red, and they contain $K_{1,a}\sqcup K_{1,b}$ since $a\ge b$.
Hence $\rhat(K_{1,a}\sqcup K_{1,b},rK_2)\le e(Q)=ra+b$.

For the lower bound, induct on $r$ and let
$G\to(K_{1,a}\sqcup K_{1,b},rK_2)$.
The all-red coloring shows that $G$ contains $K_{1,a}\sqcup K_{1,b}$,
so $e(G)\ge a+b$ when $r=1$, and $\Delta(G)\ge a$ in general.
For $r\ge2$, choose $v$ of maximum degree.
By Lemma~\ref{lem:deletion}, we have
$G-v\to(K_{1,a}\sqcup K_{1,b},(r-1)K_2)$, so induction yields
\[
e(G)=e(G-v)+d_G(v)\ge(r-1)a+b+a=ra+b.
\]
\end{proof}

For disjoint vertex sets $S,T$, let $e_G(S,T)$ count the edges between
$S$ and $T$, and write $e_G(S)=e(G[S])$.
Let $g,f\colon V(G)\to\mathbb{Z}_{\ge0}$. We write $g\le f$ if
$g(v)\le f(v)$ for every $v\in V(G)$. A \emph{$(g,f)$-factor} of $G$
is a spanning subgraph $J\subseteq G$ such that
\[
g(v)\le d_J(v)\le f(v) \qquad\text{for all }v\in V(G).
\]

We shall use the following nonnegative specialization of Lov\'asz's
$(g,f)$-factor theorem~\cite{Lovasz1970}; see also
\cite{AkiyamaKano2011}.

\begin{lemma}[Lov\'asz's $(g,f)$-factor theorem]
\label{lem:gf-factor}
Let $G$ be a graph, and let
$g,f\colon V(G)\to\mathbb Z_{\ge0}$ satisfy $g\le f$.
Then $G$ has a $(g,f)$-factor if and only if, for every pair of
disjoint sets $S,T\subseteq V(G)$,
\[
 \sum_{v\in S}f(v)
 +\sum_{v\in T}\bigl(d_G(v)-g(v)\bigr)
 -e_G(S,T)-q_{g,f}(S,T)\ge0,
\]
where $q_{g,f}(S,T)$ is the number of components $C$ of
$G-(S\cup T)$ such that $g(v)=f(v)$ for every $v\in V(C)$ and
\[
 \sum_{v\in V(C)}f(v)+e_G(V(C),T)
\]
is odd.
\end{lemma}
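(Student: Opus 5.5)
The statement is Lov\'asz's classical $(g,f)$-factor theorem, which the paper cites rather than proves. If I had to prove it, I would treat necessity by direct counting and sufficiency by an extremal alternating-trail argument, or alternatively by reduction to Tutte's $f$-factor theorem.

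\emph{Necessity.} Let $J$ be a $(g,f)$-factor and fix disjoint sets $S,T$. Using $f(v)\ge d_J(v)$ on $S$ and $d_G(v)-g(v)\ge d_G(v)-d_J(v)$ on $T$, the left-hand side is at least
\[
\sum_{v\in S}d_J(v)+\sum_{v\in T}d_{G-J}(v)-e_G(S,T)-q_{g,f}(S,T).
\]
Every edge between $S$ and $T$ is counted once in the two sums: in $d_J$ if it lies in $J$, and in $d_{G-J}$ if it does not. These counts cancel $e_G(S,T)$. It then suffices to show that each component $C$ counted by $q_{g,f}(S,T)$ contributes at least one further edge, namely a $J$-edge from $C$ to $S$ or a non-$J$ edge from $C$ to $T$.

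Since $g=f$ on $C$, we have $d_J(v)=f(v)$ there. Hence
\[
\sum_{v\in V(C)} f(v)=2e_J(C)+e_J(C,S)+e_J(C,T).
\]
Adding $e_G(C,T)=e_J(C,T)+e_{G-J}(C,T)$ gives
\[
e_J(C,S)+e_{G-J}(C,T)\equiv \sum_{v\in V(C)} f(v)+e_G(V(C),T)\equiv 1 \pmod 2.
\]
So $e_J(C,S)+e_{G-J}(C,T)\ge 1$, as required. Distinct components contribute distinct edges, which finishes this direction.

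\emph{Sufficiency.} I would choose a spanning subgraph $J$ with $d_J\le f$ everywhere that minimizes the total deficiency $\sum_v \max\{0,g(v)-d_J(v)\}$. Suppose this minimum is positive, and fix a deficient vertex $u$. Consider alternating trails from $u$ that start with an edge not in $J$ and then alternate between non-$J$ and $J$ edges. Minimality forbids any such trail from ending at a vertex where it could be flipped to improve the deficiency: a vertex with $d_J<f$ reached via a non-$J$ edge, or a vertex with $d_J>g$ reached via a $J$-edge.

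From the reachability structure I would define $S$ and $T$:
\begin{itemize}
\item $S$ consists of the vertices reachable by an even-length trail (ending in a $J$-edge, or $u$ itself) at which $d_J=f$, so they are saturated;
\item $T$ consists of the vertices reachable by an odd-length trail at which $d_J=g$, so they are tight.
\end{itemize}
The remaining reachable vertices split into components of $G-(S\cup T)$ on which $g=f$ and the parity is odd. The key computation is to show that, for this choice of $(S,T)$, the quantity in the criterion equals minus the deficiency, and is therefore negative, which contradicts the hypothesis.

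\emph{Main obstacle.} The hard step is the last one: the exact bookkeeping showing that the reachable non-$S$, non-$T$ vertices form odd components counted by $q_{g,f}$. This is the analogue of blossom handling in Edmonds' algorithm. If that becomes unwieldy, my fallback is the standard gadget reduction to Tutte's $f$-factor theorem. For each vertex $v$ with $g(v)<f(v)$, attach $f(v)-g(v)$ pendant slack structures that can absorb between $0$ and $f(v)-g(v)$ degree. I would then verify that Tutte's condition for the auxiliary graph translates into the displayed inequality. Vertices with $g<f$ then never lie in parity-critical components, which explains why only components with $g\equiv f$ contribute to $q_{g,f}$.
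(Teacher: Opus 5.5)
Your necessity argument is correct and complete: the parity identity $e_J(V(C),S)+e_{G-J}(V(C),T)\equiv\sum_{v\in V(C)}f(v)+e_G(V(C),T)\pmod 2$ is exactly the right count. The paper gives no proof of this lemma at all; it cites Lov\'asz (1970) and Akiyama--Kano.

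The gap is in sufficiency, and it starts with a concrete error: your $S$ and $T$ are swapped relative to your own minimality observations. A vertex $v\ne u$ reached by an odd trail (last edge not in $J$) must have $d_J(v)=f(v)$. So the odd-reached vertices are the saturated ones, and they belong in $S$. A vertex reached by an even trail (last edge in $J$), including $u$ via the trivial trail, has $d_J(v)\le g(v)$. These belong in $T$, where $d_G(v)-g(v)=d_{G-J}(v)-(g(v)-d_J(v))$ is what makes the total negative. Under your definitions $u\notin S\cup T$, because $d_J(u)<g(u)\le f(u)$, so the deficiency never enters the count.

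Concretely, let $G$ be a single edge $uv$ with $g(u)=f(u)=1$ and $g(v)=f(v)=0$. Then $J=\emptyset$, and your recipe gives $S=\emptyset$ and $T=\{v\}$. The left-hand side equals $1$, since the component $\{u\}$ has $f(u)+e_G(\{u\},T)=2$ and so is not counted. By contrast, $S=\{v\}$, $T=\{u\}$ gives $-1$.

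Even after this correction, the substance of the theorem is precisely the step you leave open. Take $S$ to be the vertices reachable only by odd trails, $T$ those reachable only by even trails, and $Y=V(G)\setminus(S\cup T)$. You must then show all of the following:
\begin{itemize}
\item $J$ has no edge inside $S$, and $G-J$ has none inside $T$.
\item Unreachable vertices receive no $J$-edge from $S$, no non-$J$ edge from $T$, and no edge at all from doubly reachable vertices. This needs care when the extending edge already lies on the trail.
\item Doubly reachable vertices satisfy $d_J=g=f$, except that $u$ may itself be doubly reachable with $d_J(u)=f(u)-1$.
\item Finally, $e_J(S,Y)+e_{G-J}(T,Y)<q_{g,f}(S,T)+\sum_{v\in T}(g(v)-d_J(v))$.
\end{itemize}
The natural route to the last inequality is a unique-stem property. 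Every component $C$ of the doubly reachable part should have exactly one edge that is either a $J$-edge to $S$ or a non-$J$ edge to $T$ if $u\notin V(C)$, and no such edge if $u\in V(C)$. Your parity identity then shows that each such $C$ is counted by $q_{g,f}$. This is the trail analogue of blossom analysis, and it is delicate because trails may revisit vertices of $C$; it cannot simply be asserted.

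Your fallback is in the same state. In an $f$-factor instance every vertex, gadget vertices included, has an exact prescribed degree. So the slack must be routed to some absorbing vertex subject to a parity constraint. Translating Tutte's condition for the auxiliary graph back into the displayed inequality is the whole content, and you do not carry it out. As written, only necessity is proved; in the context of the paper, the citation is the appropriate justification.
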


For positive integers $\alpha,\beta$, set $D=\alpha+\beta$ and
\begin{equation}\label{eq-3}
A(S,T)=\sum_{v\in T}\min\{d_G(v),\beta\}+\alpha|S|-e_G(S,T).
\end{equation}
Let $q_D(S,T)$ count the components $C$ of $G-(S\cup T)$ in which
every vertex has degree $D$ in $G$ and
$\alpha|V(C)|+e_G(V(C),T)$ is odd.
The following edge partition condition follows from
Lemma~\ref{lem:gf-factor}.

\begin{corollary}\label{lem:lovasz}
Let $G$ be a graph with $\Delta(G)\le D$.
Then $E(G)$ has a partition $E(G)=E(J)\sqcup E(K)$ into spanning
subgraphs satisfying $\Delta(J)\le\alpha$ and $\Delta(K)\le\beta$
if and only if
\[
A(S,T)\ge q_D(S,T)
\]
for all disjoint $S,T\subseteq V(G)$.
\end{corollary}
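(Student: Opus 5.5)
We will derive Corollary~\ref{lem:lovasz} from Lemma~\ref{lem:gf-factor} with $g(v)=\max\{0,d_G(v)-\beta\}$ and $f(v)=\min\{d_G(v),\alpha\}$. A partition $E(G)=E(J)\sqcup E(K)$ with $\Delta(J)\le\alpha$ and $\Delta(K)\le\beta$ is the same thing as a spanning subgraph $J$ with $d_J(v)\le\alpha$ and $d_G(v)-d_J(v)\le\beta$ for every $v$. Together with $0\le d_J(v)\le d_G(v)$, these conditions say exactly that $J$ is a $(g,f)$-factor. The hypothesis $\Delta(G)\le D=\alpha+\beta$ gives $d_G(v)-\beta\le\alpha$, so $g\le f$ and Lemma~\ref{lem:gf-factor} applies. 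The corollary therefore reduces to showing that the Lov\'asz condition for this pair $(g,f)$ is equivalent to $A(S,T)\ge q_D(S,T)$ for all disjoint $S,T$.

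The next step is to compute the Lov\'asz expression term by term.

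\emph{The $T$-term.} We have $d_G(v)-g(v)=\min\{d_G(v),\beta\}$, which matches the first sum in \eqref{eq-3}.

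\emph{The $S$-term.} We have $\sum_{v\in S}f(v)=\sum_{v\in S}\min\{d_G(v),\alpha\}$, which can be smaller than $\alpha|S|$.

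\emph{The odd components.} The equality $g(v)=f(v)$ means $\max\{0,d-\beta\}=\min\{d,\alpha\}$. If $d>\beta$ this gives $d-\beta=\min\{d,\alpha\}$, forcing $d=\alpha+\beta=D$ (using $d\le D$). If $d\le\beta$ it gives $\min\{d,\alpha\}=0$, forcing $d=0$. Components of $G-(S\cup T)$ all of whose vertices have degree $D$ are exactly those counted in $q_D$, since there $\sum f=\alpha|V(C)|$. The remaining tight components are isolated vertices of $G$, for which $\sum f+e_G(V(C),T)=0$ is even, so they contribute nothing. Hence $q_{g,f}(S,T)=q_D(S,T)$.

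The only discrepancy is therefore the $S$-term. For the forward direction, the Lov\'asz condition gives, for every pair,
\[
A(S,T)\ge \sum_{v\in S}\min\{d_G(v),\alpha\}+\sum_{v\in T}\min\{d_G(v),\beta\}-e_G(S,T)\ge q_D(S,T).
\]

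For the reverse direction, given disjoint $S,T$, let $S'=\{v\in S: d_G(v)<\alpha\}$ and move $S'$ out of $S$. Write $\Lambda(S,T)$ for the Lov\'asz expression $\sum_S f+\sum_T\min\{d_G,\beta\}-e_G(S,T)-q_D(S,T)$. Each $v\in S'$ contributes $d_G(v)-e_G(v,T)\ge0$ to $\Lambda(S,T)$. Removing $S'$ from $S$ turns the Lov\'asz expression into $A(S\setminus S',T)-q_D(S\setminus S',T)$, which is nonnegative by hypothesis. It remains to compare $q_D(S\setminus S',T)$ with $q_D(S,T)$.

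This comparison is the main obstacle. When $S'$ is removed from $S$, the vertices of $S'$ have degree $<\alpha\le D$, so they are not degree-$D$ vertices. Any component of $G-((S\setminus S')\cup T)$ containing a vertex of $S'$ is therefore not counted by $q_D$. Components avoiding $S'$ are unchanged. The lost components are those counted in $q_D(S,T)$ that are adjacent to $S'$. Each such component has an edge to $S'$, and these edges are distinct and not incident to $T$. So the number of lost components is at most $\sum_{v\in S'}\bigl(d_G(v)-e_G(v,T)\bigr)$, which is exactly the slack those vertices contributed. This gives
\[
\Lambda(S,T)\ge A(S\setminus S',T)-q_D(S\setminus S',T)\ge0,
\]
completing the equivalence.

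If this edge-counting bookkeeping turns out messy, the fallback is to exploit that the Lov\'asz condition need only be checked for $S,T$ minimizing the expression. In a minimizing pair one can assume every vertex of $S$ has $f(v)=\alpha$, since a vertex with $f(v)=d_G(v)$ can be deleted from $S$ without increasing the expression, by the same count.
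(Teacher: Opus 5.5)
Your proof is correct, but it takes a longer route than the paper. The paper sets $f\equiv\alpha$ instead of $f=\min\{d_G,\alpha\}$. Lemma~\ref{lem:gf-factor} only requires $g\le f$, and $d_J(v)\le d_G(v)$ is automatic for $J\subseteq G$, so a $(g,\alpha)$-factor is still exactly the desired $J$. With this choice the Lov\'asz expression is identically $A(S,T)$. Since $\alpha>0$, the equality $g(v)=f(v)$ holds iff $d_G(v)=D$, with no degree-$0$ case, so $q_{g,f}=q_D$ and the corollary is a literal restatement of the lemma.

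Your choice of $f$ creates the pairwise discrepancy $\alpha|S|-\sum_{v\in S}\min\{d_G(v),\alpha\}$, which you then remove by an exchange argument. You move $S'=\{v\in S:d_G(v)<\alpha\}$ out of $S$ and observe that every component meeting $S'$ contains a vertex of degree less than $D$, so it is uncounted. You then bound the lost counted components by $e_G(S',Y)\le\sum_{v\in S'}(d_G(v)-e_G(v,T))$. That bookkeeping checks out. So does your identification of the tight vertices: they have degree $D$ or are isolated, and the isolated ones give even parity.

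What your route buys is independence from whether one's version of the factor theorem permits $f(v)>d_G(v)$. It also shows that the two families of inequalities are equivalent, even though the family for $f=\min\{d_G,\alpha\}$ is stronger pair by pair. The paper's route buys a proof with no exchange step at all. Your fallback paragraph is unnecessary.
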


\begin{proof}
Set $g(v)=\max\{0,d_G(v)-\beta\}$ and $f(v)=\alpha$ for
every $v\in V(G)$. Since $\Delta(G)\le D$, we have $0\le g(v)\le f(v)$.
A spanning subgraph $J$ is a $(g,f)$-factor of $G$ if and only if
$\Delta(J)\le\alpha$ and $\Delta(K)\le\beta$, where $K$ is the
spanning subgraph with $E(K)=E(G)\setminus E(J)$.

Since $\alpha>0$, we have $g(v)=f(v)$ if and only if $d_G(v)=D$.
For each component $C$ of $G-(S\cup T)$, we also have
$\sum_{v\in V(C)}f(v)=\alpha|V(C)|$.
Thus $q_{g,f}(S,T)=q_D(S,T)$ for all disjoint $S,T\subseteq V(G)$.
Moreover,
\[
\begin{aligned}
&\sum_{v\in S}f(v)+\sum_{v\in T}\bigl(d_G(v)-g(v)\bigr)-e_G(S,T)\\
&\qquad=\alpha|S|+\sum_{v\in T}\min\{d_G(v),\beta\}-e_G(S,T)
=A(S,T).
\end{aligned}
\]
The equivalence now follows from Lemma~\ref{lem:gf-factor}.
\end{proof}

A proper edge coloring assigns distinct colors to adjacent edges;
in particular, each color class is a matching.

\begin{lemma}[Vizing~\cite{Vizing1964}]
\label{lem:vizing}
Every simple graph $G$ admits a proper edge coloring with at most
$\Delta(G)+1$ colors.
\end{lemma}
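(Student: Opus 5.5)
We prove Vizing's theorem by the classical fan-and-Kempe-chain argument, by induction on $e(G)$. Set $k=\Delta(G)+1$. The case $e(G)=0$ is trivial. Otherwise pick an edge $uv_0$. By induction, $G-uv_0$ has a proper edge coloring $\varphi$ with colors $\{1,\ldots,k\}$, since $\Delta(G-uv_0)\le\Delta(G)$. We must modify $\varphi$ so that $uv_0$ can also be colored.

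The basic tool is that every vertex $w$ has degree at most $\Delta(G)<k$, so some color is \emph{missing} at $w$; write $m(w)$ for a chosen missing color. The second tool is the Kempe chain. For colors $\gamma,\delta$, the subgraph of edges colored $\gamma$ or $\delta$ has maximum degree at most $2$, so its components are paths and even cycles. Swapping $\gamma$ and $\delta$ on one component keeps the coloring proper.

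Next we build a maximal \emph{fan} at $u$: a sequence of distinct neighbors $v_0,v_1,\ldots,v_t$ of $u$ such that, for each $i\ge1$, the edge $uv_i$ is colored $m(v_{i-1})$. We extend the fan as long as $m(v_t)$ appears on some edge $uv$ with $v$ not yet in the fan. There is a useful \emph{rotation} operation: if $m(v_j)$ is also missing at $u$, we recolor $uv_i$ with $m(v_i)$ for $i<j$, leaving $uv_j$ uncolored, and color $uv_j$ with $m(v_j)$. This is proper by the fan condition and completes the coloring.

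So the plan is as follows.

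\emph{Case 1:} $m(v_t)$ is missing at $u$. Rotate up to $v_t$ and we are done.

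\emph{Case 2:} $m(v_t)=\delta$ is present at $u$. By maximality of the fan, the edge of color $\delta$ at $u$ is $uv_j$ for some $j<t$, so $\delta=m(v_{j-1})$, where for $j=0$ this is impossible since $uv_0$ is uncolored. Let $\gamma=m(u)$. Consider the $(\gamma,\delta)$-Kempe chain $P$ starting at $u$; it is a path, because $\gamma$ is missing at $u$. Since $\delta$ is missing at both $v_{j-1}$ and $v_t$, each of these vertices is an endpoint of its own $(\gamma,\delta)$-path, and at most one of them can lie on $P$. This gives two subcases.
\begin{itemize}
\item If $v_{j-1}\notin P$, swap colors on the $(\gamma,\delta)$-component containing $v_{j-1}$. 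This does not touch $u$, and now $\gamma$ is missing at both $v_{j-1}$ and $u$. Rotate up to $v_{j-1}$.
\item Otherwise $v_t\notin P$. Swap on the component containing $v_t$, so that $m(v_t)=\gamma$ becomes missing at $u$. Then rotate up to $v_t$.
\end{itemize}

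The main obstacle is this last step. We must check that the Kempe swap does not destroy the fan property for the indices used in the rotation. The swapped component avoids $u$, so the colors on the edges $uv_i$ are unchanged. The only concern is whether the missing colors $m(v_i)$ for $i$ below the rotation point change. The fan vertices that miss $\gamma$ or $\delta$ are controlled by distinctness of the fan colors. The $j-1$ versus $t$ case split is chosen exactly so that the swapped component contains only the vertex we rotate to, among the fan vertices whose missing color matters. Verifying this bookkeeping carefully is where the care goes.
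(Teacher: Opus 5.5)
Your proof is correct. The paper does not prove this lemma: it quotes Vizing's theorem as a black box and uses it once, in Claim~\ref{clm:first-step}, to split $E(G)$ into $a-1$ red and $n_1-1$ blue matchings when $\Delta(G)\le a+n_1-3$. So the paper offers no argument to compare with. What you supply is the classical fan-and-Kempe-chain proof, which makes the lemma self-contained at the cost of some standard bookkeeping.

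Two points should be made explicit when you write it up.

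First, in Case 2 you need $j\ne t$ as well as $j\ne 0$. This holds because $uv_t$ is incident with $v_t$, and $\delta$ is missing at $v_t$. Consequently $v_{j-1}\ne v_t$.

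Second, the bookkeeping you defer is short and should be written out:
\begin{itemize}
\item For $0\le i\le t-1$, $m(v_i)$ is the color of $uv_{i+1}$. It is therefore present at $u$, so $m(v_i)\ne\gamma$.
\item Edges at $u$ carry distinct colors, so $m(v_i)=\delta$ holds only for $i=j-1$.
\item Hence a $(\gamma,\delta)$-swap on a component avoiding $u$ changes no edge color at $u$. It also leaves every designated missing color $m(v_i)$ with $i\notin\{j-1,t\}$ still missing at $v_i$.
\item In the first subcase you rotate only through $v_0,\dots,v_{j-1}$, so nothing else needs checking.
\item In the second subcase, $v_{j-1}$ lies on $P$, which is a different component from the one swapped. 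So $\delta$ is still missing at $v_{j-1}$.
\end{itemize}

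Your phrase ``the swapped component contains only the vertex we rotate to'' is not literally true: the swapped component may pass through other fan vertices, and in the first subcase it may even end at $v_t$. By the observation above, this is harmless.
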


\section{Proof of the main theorem}

\begin{proof}[Proof of Theorem~\ref{thm:main}]
For the upper bound, we use the standard star-forest construction
described by Davoodi et al.~\cite{DavoodiEtAl2025}.
Let $S_1,\ldots,S_{q+1}$ be pairwise disjoint stars with
\begin{align*}
 e(S_1)&=a+n_1-1,
 e(S_i)=\max\{a+n_i-1,b+n_{i-1}-1\}
 \quad(2\le i\le q),\\
 e(S_{q+1})&=b+n_q-1.
\end{align*}
We claim that
\[
\bigsqcup_{i=1}^{q+1}S_i
\to\left(K_{1,a}\sqcup K_{1,b},
         \bigsqcup_{i=1}^{q}K_{1,n_i}\right).
\]
Consider a red/blue coloring of \(\bigsqcup_{j=1}^{q+1}S_j\) with no red copy of \(K_{1,a}\sqcup K_{1,b}\).
If no \(S_j\) with \(1\le j\le q\) contains a red \(K_{1,a}\), then \(e(S_j)\ge a+n_j-1\), so \(S_j\) contains a blue \(K_{1,n_j}\) for each \(1\le j\le q\).
Otherwise, let \(i\) be the smallest index with \(i\le q\) such that \(S_i\) contains a red \(K_{1,a}\). For \(j<i\), the choice of \(i\) gives a blue \(K_{1,n_j}\) in \(S_j\). For \(i<j\le q+1\), the star \(S_j\) contains no red \(K_{1,b}\), since otherwise, together with the red \(K_{1,a}\) in \(S_i\), it would form a red copy of \(K_{1,a}\sqcup K_{1,b}\). Since \(e(S_j)\ge b+n_{j-1}-1\), the star \(S_j\) contains a blue \(K_{1,n_{j-1}}\).

Thus in either case the coloring contains a blue copy of \(\bigsqcup_{j=1}^{q}K_{1,n_j}\). The number of edges in this disjoint union equals the right-hand side of \eqref{eq:main-formula}.

For the lower bound, set
\[
 N=\sum_{i=1}^{q}n_i,
 \qquad
 H=\bigsqcup_{i=1}^{q}K_{1,n_i}.
\]
We use strong induction on $(q,b,N)$ in lexicographic order,
proving each induction step simultaneously for all $a\ge b$ and all
non-increasing sequences $\boldsymbol n=(n_1,\ldots,n_q)$ of positive integers with sum $N$.
For $q=1$, color symmetry and Lemma~\ref{lem:one-star} give the required
value $(a+n_1-1)+(b+n_1-1)$.
For $a=b=1$, let $G\to(2K_2,H)$.
The all-blue coloring contains a copy of $H$, so $\Delta(G)\ge n_1$.
Choose a vertex $v$ of maximum degree. By Lemma~\ref{lem:deletion},
$G-v\to(K_2,H)$, so $G-v$ contains a copy of $H$. Hence
\[
e(G)=d_G(v)+e(G-v)\ge n_1+e(H)=n_1+\sum_{i=1}^{q}n_i.
\]
This is the right side of \eqref{eq:main-formula} when $a=b=1$.
Together with the upper bound proved above, this establishes this case.

Fix \(q\ge2\). Throughout the remainder of the proof, by the induction hypothesis we mean that Theorem~\ref{thm:main} holds for every triple that precedes \((q,b,N)\) in the lexicographic order, and for all admissible choices of \(a\) and \(\boldsymbol n\) corresponding to such a triple.
A \emph{remaining counterexample} is a triple $(a,\boldsymbol n,G)$
at this induction level, with $a\ge b$, $(a,b)\ne(1,1)$ and
$n_1\ge\cdots\ge n_q\ge1$, such that $G\to(K_{1,a}\sqcup K_{1,b},H)$
but $e(G)$ is smaller than the right side of \eqref{eq:main-formula}.
We will show that no such triple exists.

We first establish the residual estimate used after deleting a vertex.

For $q\ge2$, let $\mathcal R(G;a,b;n_1,\ldots,n_q)$ mean that both of
the following Ramsey relations hold:
\begin{equation}\label{eq:residual-one}
 G\to\left(
 K_{1,a}\sqcup K_{1,b},
 \bigsqcup_{i=2}^{q}K_{1,n_i}
 \right),
\end{equation}
and
\begin{equation}\label{eq:residual-two}
 G\to\left(
 K_{1,b},
 \bigsqcup_{i=1}^{q}K_{1,n_i}
 \right).
\end{equation}
When $q=1$, define $\mathcal R(G;a,b;n_1)$ by
$G\to\left(K_{1,b},K_{1,n_1}\right).$
 Let
\[
\begin{split}
 \Lambda_q(a,b;\boldsymbol n)
 ={}&\sum_{i=1}^{q-1}
 \max\{a+n_{i+1}-1,b+n_i-1\}\\
 &+(b+n_q-1).
\end{split}
\]

\begin{claim}\label{clm:residual-bound}
Under this induction hypothesis, if
$\mathcal R(G;a,b;n_1,\ldots,n_q)$ holds, then
\[
 e(G)\ge \Lambda_q(a,b;\boldsymbol n).
\]
\end{claim}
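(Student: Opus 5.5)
The plan is to prove the claim by induction on $q$, peeling off one vertex at a time. Note first that
\[
\Lambda_q(a,b;\boldsymbol n)=\max\{a+n_2-1,\,b+n_1-1\}+\Lambda_{q-1}(a,b;\boldsymbol n^{+})\qquad(q\ge2).
\]
For $q=1$ the hypothesis $\mathcal R(G;a,b;n_1)$ is $G\to(K_{1,b},K_{1,n_1})$. Then $e(G)\ge b+n_1-1=\Lambda_1$ follows from Lemma~\ref{lem:one-star} (with $p=b$, one component). For the inductive step with $q\ge2$, I would pick a suitable vertex $v$ of $G$ and check that $\mathcal R(G-v;a,b;n_2,\ldots,n_q)$ holds. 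Lemma~\ref{lem:deletion}, applied to \eqref{eq:residual-one} with the blue sequence shortened, gives $G-v\to(K_{1,a}\sq K_{1,b},\bigsqcup_{i\ge3}K_{1,n_i})$. Applied to \eqref{eq:residual-two}, it gives $G-v\to(K_{1,b},\bigsqcup_{i\ge2}K_{1,n_i})$. When $q-1=1$ only the second relation is needed. The inner induction then yields $e(G-v)\ge\Lambda_{q-1}(a,b;\boldsymbol n^{+})$. It therefore suffices to find a vertex $v$ with $d_G(v)\ge\max\{a+n_2-1,\,b+n_1-1\}$, or else to bound $e(G)$ directly when no such vertex exists.

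Consider a vertex $v$ of maximum degree. I would obtain two degree bounds from the two Ramsey relations.

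\emph{From \eqref{eq:residual-one}.} If $\Delta(G)\le a+n_2-2$, I would aim for a coloring in which the red graph has maximum degree at most $a-1$ and the blue graph has maximum degree at most $n_2-1$. The red graph then contains no $K_{1,a}$, hence no $K_{1,a}\sq K_{1,b}$, and the blue graph contains no $K_{1,n_2}$, hence no $\bigsqcup_{i\ge2}K_{1,n_i}$. This would contradict \eqref{eq:residual-one}.

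\emph{From \eqref{eq:residual-two}.} If $\Delta(G)\le b+n_1-2$, the analogous split into parts of maximum degree at most $b-1$ and $n_1-1$ would contradict \eqref{eq:residual-two}.

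Both splittings are instances of Corollary~\ref{lem:lovasz}, with $(\alpha,\beta)=(a-1,n_2-1)$ and $(\alpha,\beta)=(b-1,n_1-1)$ respectively. Degenerate cases with $\alpha=0$ or $\beta=0$ are trivial, since then one color class is forced to be empty.

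The main obstacle is that such splittings do not always exist. The triangle has $\Delta=2=b+n_1-2$ for $b=n_1=2$, yet $K_3\to(K_{1,2},K_{1,2})$. By Corollary~\ref{lem:lovasz}, failure means there are disjoint $S,T$ with $A(S,T)<q_D(S,T)$, i.e.\ too many components of $G-(S\cup T)$ that are $D$-regular and satisfy the parity condition. In that case I would not delete a single vertex. Instead I would argue directly that $G$ has many edges. Take a violating pair $(S,T)$ chosen minimal, for instance with $S\cup T$ minimal. Then $A(S,T)\ge0$ forces $q_D(S,T)\ge1$, so there is a $D$-regular component $C$ of odd order (the parity condition). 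I would use such a $D$-regular component to certify $e(G)\ge\Lambda_q$ directly, plausibly via a counting argument, since $e(C)=D|V(C)|/2$ with $|V(C)|\ge D+1$. Lemma~\ref{lem:gallai} and the factor-critical structure it forces are the tools I would use here, for instance to show that minimal obstructions are a single odd clique-like regular component. If this direct count fails, I would instead take $v\in C$ and redo the deletion step with the weaker degree bound $D=\max\{\cdot\}-1$, recovering the missing edge from the edges of $C$ that survive in $G-v$ through the parity defect.
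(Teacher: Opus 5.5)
Your reduction is sound as far as it goes: the identity $\Lambda_q(a,b;\boldsymbol n)=\max\{a+n_2-1,b+n_1-1\}+\Lambda_{q-1}(a,b;\boldsymbol n^{+})$ is correct, and Lemma~\ref{lem:deletion} does give $\mathcal R(G-v;a,b;n_2,\ldots,n_q)$. The gap is the step you flag yourself. Your inductive step needs a vertex of degree at least $\max\{a+n_2-1,b+n_1-1\}$, and such a vertex need not exist, even in tight instances.

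Take $q=2$, $a=b=n_1=n_2=2$ and $G=2K_3$. If the blue (respectively red) edges form a matching, each triangle has at most one edge of that color. The other color therefore contains a $K_{1,2}$ inside each triangle. So both $G\to(K_{1,2}\sqcup K_{1,2},K_{1,2})$ and $G\to(K_{1,2},K_{1,2}\sqcup K_{1,2})$ hold, i.e.\ $\mathcal R(G;2,2;2,2)$. Moreover $e(G)=6=\Lambda_2(2,2;(2,2))$, yet $\Delta(G)=2<3$.

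So the Lov\'asz obstruction is not a degenerate corner case, and your fallback does not close it. Counting the edges of one $D$-regular component gives about $D(D+1)/2$. For fixed $D$, $\Lambda_q$ grows linearly in $q$, and you say nothing about the rest of $G$. ``Recovering the missing edge through the parity defect'' would require $e(G-v)\ge\Lambda_{q-1}+1$, a strengthened Ramsey property of $G-v$, and you give no mechanism for it. Note also that you never use the induction hypothesis on Theorem~\ref{thm:main}, which is exactly what the claim is conditioned on.

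The missing idea is to use single-vertex degree bounds only where they come for free. The paper inducts on $b$.

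For $b\ge2$, delete a maximum matching $M$ and color it red. Then $|M|\ge q$, because \eqref{eq:residual-two} forces a copy of $H$. Each red star loses at most one edge, so $\mathcal R(G-M;a-1,b-1;\boldsymbol n)$ holds. Each of the $q$ terms of $\Lambda_q$ drops by exactly one, so induction on $b$ gives the bound. In your example, $G-M=2K_{1,2}$ has $4=\Lambda_2(1,1;(2,2))$ edges.

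In the base case $b=1$, the term $b+n_1-1=n_1$ is automatically a degree bound, since $G$ contains $H$. So vertex deletion works when $n_1>a+n_2-1$. When $a+n_2-1\ge n_1$, no high-degree vertex is needed at all. Applying the main induction hypothesis (Theorem~\ref{thm:main} with $q-1$ blue components) directly to \eqref{eq:residual-one} yields exactly $\Lambda_q(a,1;\boldsymbol n)$.
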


\begin{proof}
In the inductions below, we prove the bound simultaneously for every \(a\ge b\) and every sequence \(n_1\ge\cdots\ge n_q\ge1\).
For $q=1$, the bound $e(G)\ge b+n_1-1=\Lambda_1(a,b;n_1)$
follows from Lemma~\ref{lem:one-star}.
For $q\ge2$, we induct on $b$. To establish the base case $b=1$,
we use an auxiliary induction on the number of blue components,
starting from $q=1$. 
Every application of the main induction
hypothesis in this auxiliary induction uses fewer blue components
than the current main induction step; the residual bound for
$q-1$ components is supplied by the auxiliary induction hypothesis.
If $a+n_2-1\ge n_1$, then \eqref{eq:residual-one} and the main
induction hypothesis for $q-1$ blue components imply
\[
e(G)\ge a+n_2-1+
\sum_{i=2}^{q-1}\max\{a+n_{i+1}-1,n_i\}+n_q
=\Lambda_q(a,1;\boldsymbol n).
\]
Otherwise $n_1>a+n_2-1$.
The all-blue coloring in \eqref{eq:residual-two} shows that
$\Delta(G)\ge n_1$.
Choose a vertex $v$ of maximum degree. 
For \(q\ge3\), applying the second assertion of Lemma~\ref{lem:deletion} to \eqref{eq:residual-one} with
\(\boldsymbol{x}=(a,1)\) and \(\boldsymbol{y}=(n_2,\ldots,n_q)\), and to \eqref{eq:residual-two} with
\(\boldsymbol{x}=(1)\) and \(\boldsymbol{y}=(n_1,\ldots,n_q)\), we obtain the residual relation
\[
\mathcal R(G-v;a,1;n_2,\ldots,n_q).
\]
For \(q=2\), only the one-star relation in this definition is needed.
Induction on $q$ therefore yields
\[
e(G)\ge n_1+\Lambda_{q-1}(a,1;n_2,\ldots,n_q)
=\Lambda_q(a,1;\boldsymbol n).
\]

Now let $b\ge2$, and choose a maximum matching $M$ of $G$.
Since \eqref{eq:residual-two} forces a copy of $H$, we have $|M|\ge q$.
Extend any coloring of $G-M$ by coloring $M$ red.
Each red star loses at most one edge when $M$ is deleted, while all
blue edges remain in $G-M$.
Thus both residual relations hold with $a,b$ replaced by $a-1,b-1$,
that is, $\mathcal R(G-M;a-1,b-1;n_1,\ldots,n_q)$ holds.
Each of the $q$ terms defining $\Lambda_q$ decreases by one when
$a,b$ are both decreased by one. Using induction on $b$ and $|M|\ge q$, we obtain
\[
e(G)=e(G-M)+|M|
\ge\Lambda_q(a-1,b-1;\boldsymbol n)+q
=\Lambda_q(a,b;\boldsymbol n).
\]
\end{proof}

For fixed parameters, let
\begin{equation}\label{eq-2}
\begin{aligned}
L &= (a+n_1-1)+\Lambda_q(a,b;\boldsymbol n)\\
  &= (a+n_1-1)
   +\sum_{i=1}^{q-1}
    \max\{a+n_{i+1}-1,\; b+n_i-1\}
   +(b+n_q-1).
\end{aligned}
\end{equation}
With $L$ fixed as above, the following claim shows that any graph
satisfying the displayed Ramsey relation and $e(G)\le L-1$ must have
$e(G)=L-1$ and $\Delta(G)=a+n_1-2$. Moreover, it admits no edge
partition $E(G)=E(J)\sqcup E(K)$ with $\Delta(J)\le a-1$ and
$\Delta(K)\le n_1-1$.

\begin{claim}\label{clm:first-step}
Under this induction hypothesis, if
\[
 G\to\left(K_{1,a}\sq K_{1,b},
            \bigsqcup_{i=1}^{q}K_{1,n_i}\right)
\]
and $e(G)\le L-1$, then
\[
 e(G)=L-1,
 \qquad
 \Delta(G)=a+n_1-2.
\]
Moreover, $G$ has no edge partition into a graph of maximum degree at
most $a-1$ and a graph of maximum degree at most $n_1-1$.
\end{claim}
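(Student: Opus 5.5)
Let $G\to(K_{1,a}\sqcup K_{1,b},H)$ with $e(G)\le L-1$. The plan is to obtain a lower bound $e(G)\ge L-1$ by deleting a vertex of maximum degree, and then to read off equality and the structural conclusions from the resulting slack. First, the all-blue coloring shows that $G$ contains a copy of $H$, and the all-red coloring shows that $G$ contains $K_{1,a}\sqcup K_{1,b}$; in particular $\Delta(G)\ge\max\{a,n_1\}$. Let $v$ be a vertex of maximum degree. By the second assertion of Lemma~\ref{lem:deletion} with $\boldsymbol y=\boldsymbol n$, we have $G-v\to(K_{1,a}\sqcup K_{1,b},\mathcal K(\boldsymbol n^{+}))$. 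This is \eqref{eq:residual-one} for $G-v$. By the first assertion with $\boldsymbol x=(a,b)$, we have $G-v\to(K_{1,b},H)$, which is \eqref{eq:residual-two}. Hence $\mathcal R(G-v;a,b;\boldsymbol n)$ holds, and Claim~\ref{clm:residual-bound} gives $e(G-v)\ge\Lambda_q(a,b;\boldsymbol n)$. Therefore
\[
L-1\ge e(G)=d_G(v)+e(G-v)\ge \Delta(G)+L-(a+n_1-1),
\]
so $\Delta(G)\le a+n_1-2$.

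Next I would show that $\Delta(G)\le a+n_1-2$ forces the edge-partition statement to fail. Suppose $E(G)=E(J)\sqcup E(K)$ with $\Delta(J)\le a-1$ and $\Delta(K)\le n_1-1$. Color $J$ red and $K$ blue. There is no red $K_{1,a}$, and no blue $K_{1,n_1}$, hence no blue $H$. This contradicts $G\to(K_{1,a}\sqcup K_{1,b},H)$, so no such partition exists; this is the "moreover" part.

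It remains to prove $\Delta(G)=a+n_1-2$ and $e(G)=L-1$. The equality $e(G)=L-1$ follows from $\Delta(G)=a+n_1-2$ through the displayed chain: $e(G)\ge (a+n_1-2)+\Lambda_q=L-1$. So the main obstacle is to exclude $\Delta(G)\le a+n_1-3$. In that case I would apply Vizing's theorem (Lemma~\ref{lem:vizing}) to decompose $E(G)$ into at most $\Delta(G)+1\le a+n_1-2$ matchings. I would then group these into $a-1$ matchings forming $J$ and at most $n_1-1$ matchings forming $K$. This gives $\Delta(J)\le a-1$ and $\Delta(K)\le n_1-1$, contradicting the nonexistence of such a partition established above. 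Consequently $\Delta(G)\ge a+n_1-2$, and together with the upper bound we get equality. If $a-1$ or $n_1-1$ is zero, the corresponding class is empty; the count still works because the number of matchings is at most $(a-1)+(n_1-1)$.

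So the whole argument reduces to two ingredients: the residual bound of Claim~\ref{clm:residual-bound}, which supplies the upper bound on $\Delta(G)$, and Vizing's theorem, which supplies the lower bound via the coloring argument. I do not expect Lovász's theorem to be needed at this step. Corollary~\ref{lem:lovasz} appears designed to be applied later, at the exact threshold $D=\Delta(G)=a+n_1-2$, where Vizing's theorem loses one color.
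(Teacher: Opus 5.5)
Your proposal is correct and is essentially the paper's proof. Vertex deletion combined with Claim~\ref{clm:residual-bound} gives $\Delta(G)\le a+n_1-2$ and $e(G)\ge L-1$, Vizing's theorem excludes $\Delta(G)\le a+n_1-3$, and coloring $J$ red and $K$ blue rules out the partition; the only difference is that you derive the Vizing contradiction from the no-partition statement rather than coloring the matching classes directly, which changes nothing.
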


\begin{proof}
In particular,
$G\to(K_{1,a},K_{1,n_1}).$
Indeed, any coloring with neither a red $K_{1,a}$ nor a blue
$K_{1,n_1}$ has neither a red
$K_{1,a}\sqcup K_{1,b}$ nor a blue
$\bigsqcup_{i=1}^{q}K_{1,n_i}$.

We first prove that
\[
 \Delta(G)\ge a+n_1-2.
\]
Suppose instead that $\Delta(G)\le a+n_1-3$. By
Lemma~\ref{lem:vizing}, the edges of $G$ admit a proper coloring
with at most
\[
 \Delta(G)+1\le a+n_1-2
\]
colors. Add empty color classes if necessary so that there are exactly
$a+n_1-2$ classes. 
Color the edges in $a-1$ classes red and those in
the remaining $n_1-1$ classes blue. Because each class is a matching,
the red graph has maximum degree at most $a-1$, while the blue graph
has maximum degree at most $n_1-1$. This coloring contains neither a
red $K_{1,a}$ nor a blue $K_{1,n_1}$, a contradiction.

Choose a vertex $v$ with $d_G(v)=\Delta(G)$. 
By Lemma~\ref{lem:deletion}, deleting $v$ from the original arrowing
relation gives both relations in
$\mathcal R(G-v;a,b;n_1,\ldots,n_q)$. 
Thus, by Claim~\ref{clm:residual-bound}, we have
\[
e(G-v) \ge \Lambda_q(a,b;\boldsymbol n).
\]
If $\Delta(G)\ge a+n_1-1$, then
\[
\begin{aligned}
 e(G)
 &=d_G(v)+e(G-v)\\
 &\ge(a+n_1-1)+\Lambda_q(a,b;\boldsymbol n)=L,
\end{aligned}
\]
contrary to $e(G)\le L-1$. Therefore we have
$\Delta(G)=a+n_1-2.$
Substituting this equality into
$e(G)=d_G(v)+e(G-v)$ implies
\[
 e(G)\ge(a+n_1-2)+\Lambda_q(a,b;\boldsymbol n)\overset{(\ref{eq-2})}{=}L-1.
\]
Together with the assumed upper bound, this proves $e(G)=L-1$.

Finally, suppose that
\[
 E(G)=E(G_1)\sqcup E(G_2),
 \qquad
 \Delta(G_1)\le a-1,
 \qquad
 \Delta(G_2)\le n_1-1.
\]
Color $G_1$ red and $G_2$ blue. The resulting coloring contains
neither a red $K_{1,a}$ nor a blue $K_{1,n_1}$, and hence contains
neither of the two target forests. This contradicts the arrowing
relation and proves the last assertion.
\end{proof}

The number of edges and the maximum degree of a remaining counterexample are determined by Claim~\ref{clm:first-step}.
Using these conclusions, the next claim rules out \(b\ge2\), so it remains to consider \(b=1\).

\begin{claim}\label{clm:reduction-b-one}
Under this induction hypothesis, no remaining
counterexample has \(b\ge2\).
\end{claim}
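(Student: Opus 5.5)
Assume that $(a,\boldsymbol n,G)$ is a remaining counterexample with $b\ge2$. By Claim~\ref{clm:first-step} we have $e(G)=L-1$ and $\Delta(G)=a+n_1-2$. The plan is to delete a maximum matching, as in the proofs of Lemma~\ref{lem:one-star} and Claim~\ref{clm:residual-bound}, so that $b$ drops and the induction hypothesis applies. Let $M$ be a maximum matching of $G$. The all-blue coloring gives a copy of $H$, so $|M|\ge q$. Extend any coloring of $G-M$ by coloring $M$ red. Every red star then gains at most one edge at each vertex. Hence if $G-M$ had a coloring with no red $K_{1,a-1}\sqcup K_{1,b-1}$ and no blue $H$, the extension would contain no red $K_{1,a}\sqcup K_{1,b}$. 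Indeed, a red copy of $K_{1,a}\sqcup K_{1,b}$ in $G$ would lose at most one edge at each of its two centres when $M$ is removed. Deleting leaves if necessary, it would therefore leave a red $K_{1,a-1}\sqcup K_{1,b-1}$ in $G-M$. The extension also contains no blue $H$, since no blue edges were added. Thus $G-M\to(K_{1,a-1}\sqcup K_{1,b-1},H)$.

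Since $b-1\ge1$, $b-1<b$ and $a-1\ge b-1$, the triple $(q,b-1,N)$ precedes $(q,b,N)$. The induction hypothesis therefore gives $e(G-M)\ge L(a-1,b-1;\boldsymbol n)$. Every one of the $q+1$ terms of the formula in \eqref{eq-2} decreases by exactly one when $a$ and $b$ both decrease by one, including the maxima. Hence
\[
e(G)=e(G-M)+|M|\ge L-(q+1)+|M|.
\]
To reach the contradiction $e(G)\ge L$, it therefore suffices to show that $|M|\ge q+1$.

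This is the main obstacle. The all-red coloring and the all-blue coloring give only $q$ or $2$ disjoint edges each, so they do not suffice directly. I would argue by contradiction. Suppose $\nu(G)=q$; then $\nu(G)$ equals the number of components of $H$. I would first try to show that $\nu(G-v)=\nu(G)$ for every vertex $v$. If instead $\nu(G-v)<q$ for some $v$, then I would use Lemma~\ref{lem:deletion} to get $G-v\to(K_{1,a}\sqcup K_{1,b},\mathcal K(\boldsymbol n^{+}))$ and also $G-v\to(K_{1,b},H)$. The latter forces a copy of $H$ in $G-v$, which has $q$ disjoint edges, giving a contradiction. So $\nu(G-v)=q$ for every $v$, and by Lemma~\ref{lem:gallai} every component $C$ of $G$ is factor-critical with $|V(C)|=2\nu(C)+1$. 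Then $G$ has at most $q$ nontrivial components and a total of $2q+q'$ vertices, where $q'\le q$ is the number of nontrivial components. A copy of $H$ needs $\sum_i(n_i+1)$ vertices, and $G$ also contains a vertex of degree $a+n_1-2$. I expect these counts to be incompatible, giving a contradiction, except in degenerate cases with $n_i=1$.

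If the vertex count fails to give a contradiction in those residual cases, I would fall back on a different argument. I would take a vertex $v$ of maximum degree $a+n_1-2$ and use the partition obstruction in Claim~\ref{clm:first-step}. I would apply Corollary~\ref{lem:lovasz} with $\alpha=a-1$ and $\beta=n_1-1$ to a graph with $\Delta(G)\le D$ whose components are small factor-critical graphs. The aim is to show that the condition $A(S,T)\ge q_D(S,T)$ holds in that setting. This would produce a forbidden partition and contradict Claim~\ref{clm:first-step}. Either route gives $|M|\ge q+1$ and completes the proof.
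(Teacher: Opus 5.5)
\textbf{Gap: ruling out $\nu(G)=q$.} Your reduction is correct and is the paper's. Deleting a maximum matching colored red gives $G-M\to(K_{1,a-1}\sqcup K_{1,b-1},H)$, and induction at $(q,b-1,N)$ gives $e(G-M)\ge L-(q+1)$, so everything hinges on excluding $\nu(G)=q$. Your use of Lemma~\ref{lem:deletion} and Lemma~\ref{lem:gallai} to make the nontrivial components factor-critical, with $|V(C_j)|=2r_j+1$ and $\sum_j r_j=q$, also matches the paper. The gap is the finishing step. The vertex and degree counts you propose do not conflict; at best they give $n_i\le 2$ and $\sum_i(n_i-1)\le s-1$, where $s$ is the number of nontrivial components. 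Concretely, take $a=4$, $b=2$, $\boldsymbol n=(2,1,1,1)$, so $L=19$, and let $G=K_5\sqcup(K_5-2K_2)$. This graph has $18=L-1$ edges and $\Delta(G)=4=a+n_1-2$. It satisfies $\nu(G-v)=\nu(G)=4$ and $G-v\supseteq H$ for every $v$, and its components are factor-critical. It even has no partition into a graph of maximum degree at most $3$ and a matching, because the $K_5$ component is $4$-regular of odd order. So your Lov\'asz fallback targets a partition that need not exist for graphs with exactly the structure you would be using. No argument based only on those properties can close the case.

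\textbf{Missing idea.} You need an edge lower bound that comes from the arrowing relation itself, applied component by component. The paper shows $C_j\to(K_{1,a}\sqcup K_{1,b},r_jK_2)$ for each nontrivial component. Otherwise, take a bad coloring of $C_j$ and color every edge outside $C_j$ blue. The blue matching number is then at most $(r_j-1)+\sum_{\ell\ne j}r_\ell=q-1$, while any blue $H$ contains $q$ disjoint edges, a contradiction. Lemma~\ref{lem:two-stars-matching} then gives $e(C_j)\ge r_ja+b$, so $e(G)\ge qa+sb$. In the example, $K_5-2K_2$ has only $8<2\cdot4+2$ edges, which is exactly why that $G$ is not a counterexample. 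To finish, write $x_i=n_i-1$ and use $\max\{a+x_{i+1},b+x_i\}\le a+x_i$ together with the vertex bounds. This gives $L\le qa+b+\sum_i x_i+x_1\le qa+b+2(s-1)\le qa+sb\le e(G)$, where the last inequality uses $b\ge 2$. That contradicts $e(G)=L-1$.
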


\begin{proof}
Suppose that $b\ge2$ and $G$ is a remaining counterexample.
By Claim~\ref{clm:first-step}, $e(G)=L-1$.
A maximum matching $M$ has at least $q$ edges because $G$ contains $H$.
By the red matching-extension argument in Claim~\ref{clm:residual-bound},
$G-M\to(K_{1,a-1}\sqcup K_{1,b-1},H)$.
The induction hypothesis at $(q,b-1,N)$ implies $e(G-M)\ge L-(q+1)$,
since all $q+1$ terms of the target formula decrease by one.
Consequently $|M|\le q$, and hence $\nu(G)=q$.

The following component argument is valid for every $b\ge1$ when
$\nu(G)=q$.
For every \(v\in V(G)\), Lemma~\ref{lem:deletion} yields
\[
G-v\to(K_{1,b},H).
\]
Hence \(G-v\) contains a copy of \(H\), and \(\nu(G-v)=q=\nu(G)\).
Deleting the isolated vertices of $G$ does not affect these matching
numbers. Applying Lemma~\ref{lem:gallai} to each nontrivial component,
as explained after the lemma, shows that the nontrivial components
$C_1,\ldots,C_s$ of $G$ are factor-critical.
Writing $r_j=\nu(C_j)$, we have $r_j\ge1$,
$\sum_jr_j=q$ and $|V(C_j)|=2r_j+1$.
We claim that, for every $j$,
\[
C_j\to(K_{1,a}\sqcup K_{1,b},r_jK_2).
\]
Suppose, to the contrary, that some $C_j$ has a red-blue coloring with no
red $K_{1,a}\sqcup K_{1,b}$ and no blue matching of size $r_j$. Extend this
coloring to $G$ by coloring every edge outside $C_j$ blue. The resulting
coloring has no red copy of $K_{1,a}\sqcup K_{1,b}$. Its blue matching
number is at most
\[
(r_j-1)+\sum_{\ell\ne j}r_\ell=q-1,
\]
because the blue edges inside $C_j$ contain no matching of size $r_j$,
and the matching number of each other component $C_\ell$ is $r_\ell$.
On the other hand, every copy of $H=\bigsqcup_{i=1}^{q}K_{1,n_i}$
contains a matching of size $q$, obtained by choosing one edge from each
star component. Thus the coloring contains no blue copy of $H$, contrary to
$G\to(K_{1,a}\sqcup K_{1,b},H)$. This contradiction proves the claim.
Applying Lemma~\ref{lem:two-stars-matching} to each $C_j$ and summing,
we obtain
\[
e(G)\ge qa+sb.
\]

Set $x_i=n_i-1$.
Fix a copy of $H$, and for each $j$, let $I_j$ be the set of indices
of the star components of this copy that are contained in $C_j$. For every
$i\in I_j$, choose one edge from the corresponding star. These chosen
edges form a matching in $C_j$, because the star components of $H$ are
vertex-disjoint. Consequently, $|I_j|\le r_j$. Every star of $H$ is
contained in exactly one component $C_j$, so $\sum_{j=1}^{s}|I_j|=q$.
Since $\sum_{j=1}^{s}r_j=q$, it follows that $|I_j|=r_j$ for every $j$.
Since these stars are vertex-disjoint, we have
\[
2r_j+\sum_{i\in I_j}x_i
=\sum_{i\in I_j}(n_i+1)\le |V(C_j)|=2r_j+1.
\]
Thus every $x_i$ belongs to $\{0,1\}$.
Thus $G$ has $2q+s$ nonisolated vertices and $H$ has
$2q+\sum_{i=1}^{q}x_i$ vertices. For a nonisolated $v$, $G-v$ contains $H$
and has at most $2q+s-1$ nonisolated vertices; since $H$ has no isolated
vertices, $2q+\sum_{i=1}^{q}x_i\le 2q+s-1$, whence
\begin{equation}\label{eq:reduction-b-excess-bound}
\sum_{i=1}^{q}x_i\le s-1\le q-1.
\end{equation}

Returning to $b\ge2$, we have $\max\{a+x_{i+1},b+x_i\}\le a+x_i$
because $a\ge b$ and $x_i\ge x_{i+1}$.
Using \eqref{eq:main-formula}, $x_1\le\sum_{i=1}^{q}x_i$ and
\eqref{eq:reduction-b-excess-bound}, we obtain
\[
L\le qa+b+\sum_{i=1}^{q}x_i+x_1
\le qa+b+2(s-1)
\le qa+sb\le e(G),
\]
contrary to $e(G)=L-1$.
\end{proof}

By Claim~\ref{clm:reduction-b-one}, it remains only to exclude
counterexamples with \(b=1\) and \(a\ge2\). 
Recall that $x_i=n_i-1$.
For convenience, set
\[
\rho=\sum_{i=1}^{q-1}\max\{n_i-n_{i+1}-a+1,0\}.
\]
The next claim analyzes this remaining case. 
It shows that the smallest star component of \(H\) is a single edge, gives an upper bound on \(\nu(G)-q\), and determines \(e(G)\) exactly.

\begin{claim}
\label{clm:remaining-matching}
Assume that \(q\ge2\), \(a\ge2\), \(b=1\), and that $G$ is a
remaining counterexample. Then \(n_q=1\), and
\[
\nu(G)=q+k
\]
for some integer \(k\) satisfying
$1\le k\le\rho.$
Moreover, we have
\[
e(G)=qa+\sum_{i=1}^{q}x_i+\rho.
\]
\end{claim}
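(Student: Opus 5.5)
We first rewrite the target quantity in the case \(b=1\). Since \(n_i=x_i+1\), each middle term is
\[
\max\{a+n_{i+1}-1,\,n_i\}=a+x_{i+1}+\max\{0,\,x_i-x_{i+1}-a+1\}.
\]
Summing over \(i\), the definition \eqref{eq-2} becomes
\[
L=qa+\sum_{i=1}^{q}x_i+x_q+1+\rho .
\]
By Claim~\ref{clm:first-step}, a remaining counterexample has \(e(G)=L-1=qa+\sum_i x_i+x_q+\rho\). Hence the last assertion of the claim is equivalent to \(x_q=0\), that is, to \(n_q=1\). The proof therefore splits into two parts: bounding \(\nu(G)\), and showing \(x_q=0\).

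\emph{Upper bound on \(\nu(G)\).} Let \(M\) be a maximum matching of \(G\). Take a coloring of \(G-M\) with no red \(K_{1,a-1}\) and no blue \(H\), and extend it by coloring \(M\) red. The red maximum degree is then at most \(a-1\), so there is no red \(K_{1,a}\sq K_2\), and no blue edge was added. Hence \(G-M\to(K_{1,a-1},H)\). Lemma~\ref{lem:one-star} (valid since \(a-1\ge1\)) gives
\[
e(G-M)\ge q(a-1)+\sum_i x_i .
\]
Comparing with \(e(G)=e(G-M)+|M|\) yields \(\nu(G)\le q+x_q+\rho\). Once \(x_q=0\) is known, this is exactly \(k\le\rho\).

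\emph{Lower bound \(k\ge1\).} Suppose \(\nu(G)=q\). The component argument in the proof of Claim~\ref{clm:reduction-b-one} was stated for every \(b\ge1\) with \(\nu(G)=q\). It gives factor-critical components \(C_1,\dots,C_s\), \(e(G)\ge qa+s\), all \(x_i\in\{0,1\}\), and \(\sum_i x_i\le s-1\le q-1\). The last inequality forces \(x_q=0\), since otherwise every \(x_i=1\). Because \(a\ge2\) and \(x_i\in\{0,1\}\), every summand of \(\rho\) vanishes. Then \(L\le qa+(s-1)+0+1=qa+s\le e(G)\), contradicting \(e(G)=L-1\). So \(\nu(G)\ge q+1\).

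\emph{The step \(x_q=0\).} This is the main obstacle. The plan is to argue by contradiction, assuming \(n_q\ge2\). The first thing I would try is iterating the matching deletion above. Delete successive maximum matchings \(M_1,\dots,M_{a-1}\). After the \(t\)th deletion the remaining graph arrows \((K_{1,a-t},H)\), and each deleted matching has size at least \(q+1\) as long as the analogue of the \(\nu=q\) argument applies. After all \(a-1\) steps the graph arrows \((K_2,H)\). This would give
\[
e(G)\ge (a-1)(q+1)+\sum_i n_i ,
\]
and the hope is that this improved count forces \(e(G)\ge L\) when \(x_q\ge1\).

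If this falls short, the fallback is the rigidity supplied by Claim~\ref{clm:first-step}: \(\Delta(G)=a+n_1-2\), and \(G\) has no edge partition into parts of maximum degree at most \(a-1\) and \(n_1-1\). Corollary~\ref{lem:lovasz} with \(\alpha=a-1\) and \(\beta=n_1-1\) then produces disjoint sets \(S,T\) with \(A(S,T)<q_D(S,T)\). From such a certificate I would aim to count edges and show that \(n_q\ge2\) forces more than \(L-1\) edges. A shorter alternative, which I would also test, is the induction hypothesis at \((q,1,N-1)\) with \(n_q\) replaced by \(n_q-1\). Since \(G\) also arrows \((K_{1,a}\sq K_2,H')\) for the smaller forest \(H'\), this yields \(e(G)\ge L'\), and the hope is to combine that bound with the matching bound above to exclude \(x_q\ge1\).
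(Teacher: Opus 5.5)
Your rewriting $L=qa+\sum_i x_i+x_q+1+\rho$ is correct, and so are the bound $\nu(G)\le q+x_q+\rho$ from red matching deletion and the exclusion of $\nu(G)=q$ via the component argument; all three agree with the paper.

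The genuine gap is the step $n_q=1$. Both $k\le\rho$ and the formula for $e(G)$ depend on it, and you list strategies without closing any of them.
\begin{itemize}
\item Iterated red matching deletion only guarantees $|M_t|\ge q$ for $t\ge2$, since the intermediate graphs are no longer remaining counterexamples. Even granting $|M_t|\ge q+1$ throughout, it gives $e(G)\ge qa+\sum_i x_i+a-1$. This contradicts $e(G)=qa+\sum_i x_i+x_q+\rho$ only when $a-1>x_q+\rho$.
\item Applying the induction hypothesis at $(q,1,N-1)$ to $G$ itself gives only $e(G)\ge L-2$ or $e(G)\ge L-1$. Lowering $n_q$ by one lowers the last term by one and the term $\max\{a+n_q-1,n_{q-1}\}$ by at most one. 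Combined with $k\le x_q+\rho$, this is still consistent with $x_q\ge1$.
\item The Lov\'asz-certificate fallback is not developed. In the paper, that factor argument comes after this claim and uses $n_q=1$, for instance through $\beta=n_1-1=\sum_i(n_i-n_{i+1})$ and $e(R)\le\rho-k$.
\end{itemize}

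The missing idea is to delete a maximum matching $M$ and color it \emph{blue} instead of red. Suppose $n_q\ge2$.
\begin{itemize}
\item Extend any coloring of $G-M$ by making $M$ blue. The red graph is unchanged, and any blue copy of $H$ in $G$ loses at most one edge per star. Hence $G-M\to\bigl(K_{1,a}\sqcup K_2,\bigsqcup_{i=1}^{q}K_{1,n_i-1}\bigr)$.
\item The sequence $(n_1-1,\ldots,n_q-1)$ is positive with sum $N-q$, so the induction hypothesis at $(q,1,N-q)$ applies.
\item All $q+1$ terms of the target formula drop by exactly one, so $e(G-M)\ge L-(q+1)$.
\item You already proved $|M|=\nu(G)\ge q+1$ without using $x_q$. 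Hence $e(G)\ge L$, contradicting $e(G)=L-1$.
\end{itemize}
Once $n_q=1$ is established, your remaining steps finish the claim exactly as in the paper.
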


\begin{proof}
Since the all-blue coloring contains $H$, we have $\nu(G)\ge q$.
If equality holds, apply the component argument in
Claim~\ref{clm:reduction-b-one} with $b=1$.
Then $e(G)\ge qa+s$, $x_i\in\{0,1\}$ and
$\sum_{i=1}^{q}x_i\le s-1\le q-1$.
Thus $x_q=0$ and $x_i-x_{i+1}\le1\le a-1$.
The target formula reduces to
$L=qa+1+\sum_{i=1}^{q}x_i\le qa+s\le e(G)$,
a contradiction. Hence $\nu(G)\ge q+1$.

If $n_q\ge2$, take a maximum matching $M$ and extend colorings of
$G-M$ by coloring $M$ blue.
Each blue star loses at most one edge, so
\[
G-M\to\left(K_{1,a}\sqcup K_2,
\bigsqcup_{i=1}^{q}K_{1,n_i-1}\right).
\]
The smaller positive sequence has sum $N-q$, so the induction
hypothesis at $(q,1,N-q)$ implies $e(G-M)\ge L-(q+1)$.
Together with $|M|\ge q+1$, this implies $e(G)\ge L$, a contradiction.
Therefore $n_q=1$.

Write $\nu(G)=q+k$, where $k\ge1$, and again take a maximum matching $M$.
Extending a coloring of $G-M$ by coloring $M$ red shows that
$G-M\to(K_{1,a-1},H)$: a red $K_{1,a}$ in the extended coloring
retains at least $a-1$ edges in $G-M$.
By Lemma~\ref{lem:one-star} and $\nu(G)=q+k$, we have
\[
e(G)\ge |M|+q(a-1)+\sum_{i=1}^{q}x_i
=qa+\sum_{i=1}^{q}x_i+k.
\]
Since $x_q=0$, the identity
$\max\{a+x_{i+1},1+x_i\}=a+x_{i+1}
+\max\{x_i-x_{i+1}-a+1,0\}$
reduces  (\ref{eq-2}) to
$L=qa+\sum_{i=1}^{q}x_i+\rho+1$.
By Claim~\ref{clm:first-step},
\(e(G)=L-1=qa+\sum_{i=1}^{q}x_i+\rho\).
Comparing this with the lower bound above yields \(k\le\rho\).
\end{proof}

Iterating the matching deletion argument in the proof of Lemma~\ref{lem:one-star}, we obtain the following decomposition for graphs whose number of edges exceeds the one-star bound by \(h\).
Let \(p,q,n_1,\ldots,n_q\) be positive integers with
$n_1\ge\cdots\ge n_q$, and write
$H=\bigsqcup_{i=1}^{q}K_{1,n_i}$.
\begin{claim}
\label{clm:one-star-stability}

Suppose that
$
 Q\to(K_{1,p},H)
~\text{and}~
 e(Q)=\sum_{i=1}^{q}(p+n_i-1)+h
$
for some integer $h\ge0$.  Then there exist matchings
$M_1,\ldots,M_{p-1}$ in $Q$ and a subgraph $B\subseteq Q$
containing a copy of $H$ such that
\[
 E(Q)=E(B)\sqcup E(M_1)\sqcup\cdots\sqcup E(M_{p-1})
\]
and
\[
 e(B)-e(H)\le h.
\]
When $p=1$, the family of matchings is empty.
\end{claim}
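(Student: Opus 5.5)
We argue by induction on $p$, iterating the matching-deletion step from the proof of Lemma~\ref{lem:one-star} while tracking the surplus $h$. For $p=1$ we take $B=Q$ and no matchings. We have $Q\to(K_2,H)$, so the all-blue coloring shows that $Q$ contains a copy of $H$. Since $e(Q)=\sum_i n_i+h=e(H)+h$, we get $e(B)-e(H)=h$, as required.

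For $p\ge2$, let $M_1$ be a maximum matching of $Q$. As in the proof of Lemma~\ref{lem:one-star}, coloring $M_1$ red and extending any coloring of $Q-M_1$ shows that
\[
Q-M_1\to(K_{1,p-1},H).
\]
The all-blue coloring of $Q$ contains $H$, so $|M_1|\ge q$. We then write
\[
e(Q-M_1)=\sum_{i=1}^{q}(p-1+n_i-1)+h',\qquad h'=h+q-|M_1|.
\]
Lemma~\ref{lem:one-star} applied to $Q-M_1$ gives $h'\ge0$, so the induction hypothesis applies to $Q-M_1$ with surplus $h'$. It yields matchings $M_2,\ldots,M_{p-1}$ in $Q-M_1\subseteq Q$ and a subgraph $B\subseteq Q-M_1$ containing $H$ such that
\[
E(Q-M_1)=E(B)\sqcup E(M_2)\sqcup\cdots\sqcup E(M_{p-1}),\qquad e(B)-e(H)\le h'.
\]
Adding $M_1$ back gives the required partition of $E(Q)$, since $M_1$ is disjoint from $E(Q-M_1)$. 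Because $|M_1|\ge q$, we have $h'\le h$, and therefore $e(B)-e(H)\le h$.

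The only point that needs care is checking that the surplus does not increase. This is exactly the inequality $|M_1|\ge q$, which comes from the all-blue coloring. The other inputs are that $h'\ge0$, so the inductive hypothesis is applicable, which follows from Lemma~\ref{lem:one-star}, and that the matchings live in $Q$, which is immediate. No other obstacle is expected. Iterating the bookkeeping also shows that $\sum_{j}(|M_j|-q)\le h$, which may be useful later.
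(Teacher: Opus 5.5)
Your proof is correct and is essentially the paper's argument: the paper deletes a maximum matching $p-1$ times and then does a single edge count using $|M_i|\ge q$ at every step. You package the same iteration as an induction on $p$ with surplus $h'=h+q-|M_1|$, which needs the extra (valid) appeal to Lemma~\ref{lem:one-star} to ensure $h'\ge 0$ before invoking the inductive hypothesis.
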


\begin{proof}
If $p=1$, take $B=Q$. Since $Q\to(K_2,H)$, the all-blue coloring
contains a copy of $H$ in $B$. Also, $e(B)-e(H)=h$ by the assumed edge count.
Assume that $p\ge2$ and set $Q_0=Q$.
For $1\le i\le p-1$, choose a maximum matching $M_i$ in $Q_{i-1}$
and let $Q_i=Q_{i-1}-M_i$.
Applying the matching-deletion argument in the proof of
Lemma~\ref{lem:one-star} at each step, we obtain
\[
Q_i\to(K_{1,p-i},H)\qquad(0\le i\le p-1).
\]
Thus each $Q_i$ contains a copy of $H$.
In particular, choosing one edge from each component of a copy of $H$
in $Q_{i-1}$ shows that $|M_i|\ge q$ for $1\le i\le p-1$.

Set $B=Q_{p-1}$.
Then $B$ contains a copy of $H$, and by construction
\[
E(Q)=E(B)\sqcup E(M_1)\sqcup\cdots\sqcup E(M_{p-1}).
\]
Using $|M_i|\ge q$ for every $i$ and the assumed value of $e(Q)$,
we obtain
\[
e(B)=e(Q)-\sum_{i=1}^{p-1}|M_i|
\le\sum_{j=1}^{q}(p+n_j-1)+h-(p-1)q
=e(H)+h,
\]
as required.
\end{proof}

We next obtain the edge decomposition needed in the final argument.
We continue to use the quantity $\rho$ defined above.

\begin{claim}\label{clm:unit-tail}
Let \(G\) be a remaining counterexample, and let
$\nu(G)=q+k.$
Then there exist matchings \(M_1,\ldots,M_{a-1}\) in \(G\), a
subgraph $H_0\cong H=\bigsqcup_{i=1}^{q}K_{1,n_i},$
and a subgraph \(R\subseteq G\) such that their edge sets form the
following partition of \(E(G)\):
\begin{equation}\label{eq:stability-decomposition}
E(G)
 =E(M_1)\sqcup\cdots\sqcup E(M_{a-1})
  \sqcup E(H_0)\sqcup E(R).
\end{equation}
Moreover,
\begin{equation}\label{eq:remainder-bound}
e(R)\le \rho-k.
\end{equation}
\end{claim}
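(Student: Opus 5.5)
The plan is to apply Claim~\ref{clm:one-star-stability} to a suitable one-star relation satisfied by $G$, and then sharpen its excess bound from $h$ to $\rho-k$ using the value of $\nu(G)$. By Claim~\ref{clm:remaining-matching} we are in the case $b=1$, $a\ge2$, $n_q=1$, $\nu(G)=q+k$ with $1\le k\le\rho$, and
\[
e(G)=qa+\sum_{i=1}^{q}x_i+\rho .
\]
Since a red $K_{1,a}\sqcup K_2$ contains a red $K_{1,a}$, the relation $G\to(K_{1,a}\sqcup K_2,H)$ implies $G\to(K_{1,a},H)$. Note that
\[
\sum_{i=1}^q(a+n_i-1)=qa+\sum_{i=1}^{q} x_i,
\]
so $G$ satisfies the hypotheses of Claim~\ref{clm:one-star-stability} with $p=a$ and $h=\rho$.

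We run the iterated matching deletion of that claim with the same choices: $Q_0=G$, $M_i$ a maximum matching of $Q_{i-1}$, and $Q_i=Q_{i-1}-M_i$. The gain comes from the first step. Since $M_1$ is a maximum matching of $G$, we have $|M_1|=\nu(G)=q+k$ rather than merely $|M_1|\ge q$. The later matchings still satisfy $|M_i|\ge q$, because each $Q_{i-1}$ contains a copy of $H$. Setting $B=Q_{a-1}$, the computation in the proof of Claim~\ref{clm:one-star-stability} gives
\[
e(B)\le e(G)-(q+k)-(a-2)q=e(H)+\rho-k .
\]
Since $Q_{a-1}\to(K_2,H)$, the graph $B$ contains a copy $H_0$ of $H$. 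Put $R=B-E(H_0)$, so that $e(R)=e(B)-e(H)\le\rho-k$. The edge sets of $M_1,\ldots,M_{a-1}$, $H_0$ and $R$ are pairwise disjoint and together partition $E(G)$, which gives \eqref{eq:stability-decomposition} and \eqref{eq:remainder-bound}. The case $a\ge2$ guarantees that the family of matchings is nonempty, so $M_1$ exists.

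The only delicate point is making sure the improvement from $|M_1|=q+k$ is actually used. The first matching must be chosen as a maximum matching of $G$ itself, not of some later $Q_i$, and this is already how Claim~\ref{clm:one-star-stability} is set up. It is also worth checking that $\rho-k\ge0$, which holds by Claim~\ref{clm:remaining-matching}; without it the bound would force a contradiction, which would be fine as well. Beyond this, the argument is bookkeeping.
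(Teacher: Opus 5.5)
Your proposal is correct and is essentially the paper's argument. The paper deletes a maximum matching $M$ of $G$ with $|M|=q+k$ and applies Claim~\ref{clm:one-star-stability} to $G-M$ with $p=a-1$ and $h=\rho-k$, which is exactly your iteration with the first matching equal to $M$ (you rerun the claim's proof instead of citing its statement), and your count $e(G)-(q+k)-(a-2)q=e(H)+\rho-k$ agrees with the paper's.
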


\begin{proof}
Let $M$ be a maximum matching.
By Claim~\ref{clm:remaining-matching} and its proof,
$|M|=q+k$, $G-M\to(K_{1,a-1},H)$, and
\[
e(G-M)=\sum_{i=1}^{q}(a+n_i-2)+(\rho-k).
\]
Apply Claim~\ref{clm:one-star-stability} to $G-M$ with
$p=a-1$ and $h=\rho-k\ge0$.
There are matchings $M_1,\ldots,M_{a-2}$ and a remaining subgraph
$B$ containing a copy $H_0$ of $H$, with $e(B)-e(H_0)\le\rho-k$.
Taking $M_{a-1}=M$ and $E(R)=E(B)\setminus E(H_0)$ gives the claimed
partition and $e(R)\le\rho-k$.
When $a=2$, the family $M_1,\ldots,M_{a-2}$ is empty.
\end{proof}

Fix the decomposition in Claim~\ref{clm:unit-tail}, and denote the
centers of $H_0$ by $c_1,\ldots,c_q$, with $d_{H_0}(c_i)=n_i$.
The small number of edges in $R$ restricts the positive differences
between consecutive star sizes.

\begin{claim}\label{clm:positive-gaps}
At least two indices \(i\in\{1,\ldots,q-1\}\) satisfy
$n_i-n_{i+1}-a+1>0.$
\end{claim}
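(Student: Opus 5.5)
The plan is to argue by contradiction, in two stages: first get at least one positive gap for free, then rule out exactly one. By Claim~\ref{clm:remaining-matching} we have $1\le k\le\rho$, so $\rho\ge1$ and at least one index $i$ satisfies $n_i-n_{i+1}-a+1>0$. So suppose there is exactly one such index $t\in\{1,\ldots,q-1\}$, and write $g=n_t-n_{t+1}-a+1$, so that $\rho=g$. Claim~\ref{clm:unit-tail} then gives $e(R)\le g-k\le g-1$. Every other consecutive pair satisfies $n_i-n_{i+1}\le a-1$, and by Claim~\ref{clm:remaining-matching} $n_q=1$. The aim is to show that $G$ has at least $L$ edges, contradicting $e(G)=L-1$ from Claim~\ref{clm:first-step}.

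First I split the target at the gap. Write $H'=\bigsqcup_{i\le t}K_{1,n_i}$ and $H''=\bigsqcup_{i>t}K_{1,n_i}$. Using the one-gap structure, the formula~\eqref{eq-2} decomposes as
\[
L=\Bigl[\sum_{i=1}^{t}(a+x_i)\Bigr]+g+\Bigl[\sum_{i=t+1}^{q}(a+x_i)+1\Bigr]-(\text{overlap}).
\]
Here the second bracket is exactly the value of \eqref{eq:main-formula} for $(K_{1,a}\sq K_2,H'')$, which has no gaps. The first bracket, together with the excess $g$, is what the $t$ large stars must pay.

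Next I delete the centers $c_1,\ldots,c_t$ of $H_0$ one at a time. Each deletion applies the second assertion of Lemma~\ref{lem:deletion}, and after the $t$ deletions we get
\[
G-\{c_1,\ldots,c_t\}\to(K_{1,a}\sq K_2,H'').
\]
This relation has fewer blue components, so the induction hypothesis gives $e(G-\{c_1,\ldots,c_t\})$ at least the second bracket. It then remains to show that the edges incident with $\{c_1,\ldots,c_t\}$ number at least the first bracket plus $g$, minus whatever slack the overlap allows.

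For that edge count I use the decomposition~\eqref{eq:stability-decomposition}. Each $c_i$ has $n_i$ edges in $H_0$ and at most one edge in each $M_j$. The edges among the $c_i$ themselves are only counted once, and they can come only from $R$ or from the matchings, since the centers of $H_0$ are not adjacent in $H_0$. I would also use $\Delta(G)=a+n_1-2$ and the fact that $M_{a-1}$ is a maximum matching of size $q+k$. From these I want to conclude that each $c_i$ with $i\le t$ is covered by every $M_j$, except for at most $e(R)$ defects in total. Any uncovered center together with the small remainder $R$ should then yield an explicit coloring: color $M_1,\ldots,M_{a-1}$ red, recolor the few $R$-edges, and leave $c_i$ with blue degree below $n_i$. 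Such a coloring avoids both a red $K_{1,a}\sq K_2$ and a blue $H$, because the $g$ missing units at the gap cannot be recovered by the short tail stars $K_{1,n_{i}}$ with $i>t$.

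The main obstacle is this final degree accounting. The matchings $M_j$ need not cover the centers $c_i$, so the contributions of the deleted vertices have to be lower-bounded by combining Claim~\ref{clm:first-step} (no $(a-1,n_1-1)$ edge partition) with $e(R)\le g-k<g$. My first attempt is to show directly that a single gap leaves enough slack to build an edge partition with $\Delta(J)\le a-1$ and $\Delta(K)\le n_1-1$. Take $J=M_1\cup\cdots\cup M_{a-1}$ and put $H_0\cup R$ into $K$. Then fix the at most $g-1$ vertices of $R$ whose $K$-degree would exceed $n_1-1$, by swapping single edges between $J$ and $K$ at vertices where $J$ has spare degree. The existence of such spare degree is exactly what $\nu(G)=q+k$ together with the single-gap structure should supply.
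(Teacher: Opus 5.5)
Your opening is correct: $\rho\ge k\ge1$ gives one positive gap, and if $t$ is the only one then $\rho=g$ and $e(R)\le g-1$. From there, however, the proposal never derives a contradiction. The decisive step is deferred to a ``final degree accounting'' that you do not carry out, and none of the routes you sketch closes.

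\emph{Center deletion.} There is in fact no overlap. With $b=1$, $n_q=1$ and a single gap, \eqref{eq-2} gives exactly $L=\sum_{i\le t}(a+n_i-1)+g+\bigl[\sum_{i>t}(a+n_i-1)+1\bigr]$. So you would need at least $\sum_{i\le t}(a+n_i-1)+g$ edges at $c_1,\ldots,c_t$. Already for $t=1$ this exceeds $\Delta(G)=a+n_1-2$ from Claim~\ref{clm:first-step}. Hence deleting the large centers and inducting gives a bound of at most $L-g-1$, and cannot reach $L$.

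\emph{The coloring.} It is unspecified. With $M_1,\ldots,M_{a-1}$ red, all of $H_0\cong H$ is blue. Recoloring $H_0$-edges red can push their other endpoints to red degree $a$, which produces a red $K_{1,a}\sq K_2$.

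\emph{The partition.} Taking $J=M_1\cup\cdots\cup M_{a-1}$ and $K=H_0\cup R$ violates $\Delta(K)\le n_1-1$ at $c_1$ regardless of $R$, since $d_{H_0}(c_1)=n_1$. Repairing this is exactly the factor problem the paper solves later in Claim~\ref{clm:bounded-partition} via Corollary~\ref{lem:lovasz}. That argument itself relies on the present claim, through $\beta\ge 2a$ and Claim~\ref{clm:max-degree-centers}, so it is not available to you here. The phrase ``should supply'' is hiding the whole difficulty.

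The missing idea is a short local count. You need a vertex outside $\{c_1,\ldots,c_t\}$ whose degree must be paid for by $R$.
\begin{itemize}
\item By Lemma~\ref{lem:deletion}, $G-c_t\to(K_2,H)$, so $G-c_t$ contains a copy of $H$.
\item The $t$ largest centers of that copy have degree at least $n_t$. Together with $c_t$, these are $t+1$ vertices of degree at least $n_t$, so one of them, $v$, lies outside $\{c_1,\ldots,c_t\}$.
\item In \eqref{eq:stability-decomposition}, $d_{H_0}(v)\le n_{t+1}$, because $v$ is a later center, a leaf, or outside $H_0$. The matchings contribute at most $a-1$.
\item Therefore $d_R(v)\ge n_t-n_{t+1}-a+1=g$, contradicting $e(R)\le g-1$.
\end{itemize}
Applied to every gap index, this shows each positive summand of $\rho$ is at most $\rho-k<\rho$, which is how the paper obtains two positive summands. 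No induction, coloring or factor argument is needed.
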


\begin{proof}
For each $i$, the graph $G$ has at least $i+1$ vertices of degree at
least $n_i$. Indeed, $d_G(c_i)\ge n_i$. By
Lemma~\ref{lem:deletion}, $G-c_i\to(K_2,H)$, so the all-blue coloring
of $G-c_i$ contains a copy of $H$. The centers of the $i$ largest
stars in this copy have degree at least $n_i$ in $G-c_i$, since
$n_1\ge\cdots\ge n_i$. These $i$ distinct centers are different from
$c_i$; together with $c_i$, they give $i+1$ distinct vertices of
degree at least $n_i$ in $G$.

If $n_i-n_{i+1}-a+1>0$, choose a vertex $v\notin\{c_1,\ldots,c_i\}$
with $d_G(v)\ge n_i$.
Then $d_{H_0}(v)\le n_{i+1}$, and the \(a-1\) matchings contribute
at most \(a-1\) to its degree. Hence
\[
n_i-n_{i+1}-a+1
\le d_R(v)\le e(R)\le\rho-k<\rho.
\]
Since
\[
\rho=\sum_{i=1}^{q-1}\max\{n_i-n_{i+1}-a+1,0\}\ge k\ge1,
\]
there must be at least two positive summands.
\end{proof}

If \(q=2\), then the index set \(\{1,\ldots,q-1\}\) contains only
the index \(i=1\). This contradicts Claim~\ref{clm:positive-gaps},
which requires at least two indices satisfying
\(n_i-n_{i+1}-a+1>0\). Hence every remaining counterexample satisfies
\(q\ge3\). We now identify the vertices of maximum degree.
Put \(D=a+n_1-2\).

\begin{claim}
\label{clm:max-degree-centers}
With this notation, \(\Delta(G)=D\), and every vertex of degree \(D\) is a center of
the fixed copy \(H_0\).
\end{claim}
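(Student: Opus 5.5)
The first assertion, \(\Delta(G)=D\), is already contained in Claim~\ref{clm:first-step}, since \(D=a+n_1-2\). For the second assertion, I plan to fix a vertex \(v\) with \(d_G(v)=D\) and suppose that \(v\) is not one of the centers \(c_1,\ldots,c_q\). The aim is to show that the decomposition \eqref{eq:stability-decomposition} of Claim~\ref{clm:unit-tail} cannot then supply \(D\) edges at \(v\).

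First, I split the degree along the partition:
\[
d_G(v)=\sum_{j=1}^{a-1}d_{M_j}(v)+d_{H_0}(v)+d_R(v).
\]
Each \(M_j\) is a matching, so together they contribute at most \(a-1\). Since \(v\) is not a center of \(H_0\), it is either a leaf of \(H_0\) or lies outside \(H_0\), so \(d_{H_0}(v)\le 1\). Finally, \(d_R(v)\le e(R)\le\rho-k\) by \eqref{eq:remainder-bound}. Combining these gives
\[
a+n_1-2\le a+\rho-k.
\]

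Next, I bound \(\rho\) from above, and this is the step where the earlier claims are really used. By Claim~\ref{clm:positive-gaps}, at least two of the summands \(n_i-n_{i+1}-a+1\) are positive. The positive summands telescope to at most the total drop minus \(a-1\) per positive term. Using \(n_q=1\) from Claim~\ref{clm:remaining-matching}, this gives
\[
\rho\le \sum_{i=1}^{q-1}(n_i-n_{i+1})-2(a-1)=n_1-1-2(a-1).
\]
More carefully: summing \(n_i-n_{i+1}-a+1\) over the set \(P\) of positive indices is at most \(\sum_{i\in P}(n_i-n_{i+1})-|P|(a-1)\le (n_1-n_q)-2(a-1)\).

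With \(k\ge1\), the degree inequality then becomes
\[
a+n_1-2\le a+n_1-2a,
\]
that is, \(2a\le 2\). This contradicts \(a\ge2\), so every vertex of degree \(D\) is a center of \(H_0\).

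The only delicate point is checking that the leaf contribution of \(H_0\) is at most \(1\), and that the two positive gaps from Claim~\ref{clm:positive-gaps} each cost at least \(a-1\) in the telescoping estimate. The slack needed is exactly the \(a-1\ge1\) coming from the second positive gap. I expect this bound on \(\rho\) to be the main obstacle, mild as it is.
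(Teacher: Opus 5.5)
Your proof is correct and follows the paper's argument. You split $d_G(v)$ along the decomposition of Claim~\ref{clm:unit-tail} to get $d_G(v)\le a+\rho-k$, then use $n_q=1$ and the two positive gaps from Claim~\ref{clm:positive-gaps} to get $\rho\le n_1-1-2(a-1)$, which is the paper's bound $n_1-1-\rho=\sum_i\min\{n_i-n_{i+1},a-1\}\ge 2(a-1)$. One small correction to your closing comment: the second gap is not where the slack comes from, since a single positive gap already gives $\rho\le n_1-a$, hence $d_G(v)\le n_1-1<D$ because $a\ge2$.
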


\begin{proof}
By Claim~\ref{clm:first-step}, $\Delta(G)=a+n_1-2=D$.
Since $n_q=1$ and Claim~\ref{clm:positive-gaps} gives at least two
indices with $n_i-n_{i+1}\ge a$,
\begin{align*}
n_1-1-\rho
&=\sum_{i=1}^{q-1}\left(n_i-n_{i+1}
-\max\{n_i-n_{i+1}-a+1,0\}\right)\\
&=\sum_{i=1}^{q-1}\min\{n_i-n_{i+1},a-1\}
\ge2(a-1)>0.
\end{align*}
Consequently, we have \(a+\rho-1<D\).
For $v\notin\{c_1,\ldots,c_q\}$, we have $d_{H_0}(v)\le1$.
Using \eqref{eq:stability-decomposition} and \eqref{eq:remainder-bound},
we obtain
\[
d_G(v)\le a+e(R)
\le a+\rho-k\le a+\rho-1<D.
\]
Thus every vertex of maximum degree is a center of $H_0$.
\end{proof}

We next show that $a\ge3$. Suppose that $a=2$.
Then $D=a+n_1-2=n_1$.
Let \(W\) be the set of vertices of degree \(D\). By
Claim~\ref{clm:max-degree-centers}, every vertex of \(W\) is a center
of the fixed copy \(H_0\). 
Select one edge from each component of $H_0$ whose center lies in $W$.
Since the star components are vertex-disjoint, the selected edges
form a matching $M$ that covers $W$.

Color the edges of \(M\) red and all remaining edges blue. The red
graph has maximum degree at most \(1=a-1\). Every vertex in \(W\)
loses an incident edge when \(M\) is removed, whereas every vertex
outside \(W\) already has degree at most \(D-1\). Hence the blue graph
has maximum degree at most
$D-1=n_1-1.$
This bounded-degree edge partition contradicts
Claim~\ref{clm:first-step}.

Hence $a\ge3$, and therefore $a-1\ge2$.
Set $\beta=n_1-1$ and $\alpha=a-1$. Since \(D=a+n_1-2\), 
we have
$D=\alpha+\beta$.
We use $A(S,T)$ and $q_D(S,T)$ as defined before Corollary~\ref{lem:lovasz}
to prove the required edge partition.

The next estimate bounds the edges from $S\cup T$ to its complement
in terms of $A(S,T)$.

\begin{claim}
\label{clm:certificate-boundary}
Suppose
$\Delta(G)\le D$. For disjoint sets $S,T\subseteq V(G)$ and
$Y=V(G)\setminus(S\cup T)$, the following hold:
\begin{equation}
\label{eq:A-nonnegative}
A(S,T)\ge 0,
\end{equation}
and
\begin{align}
e_G(S,Y)
 &\le \frac{D}{\alpha}\,A(S,T),
\label{eq:S-boundary}\\
e_G(T,Y)
 &\le \frac{D}{\beta}\,A(S,T).
\label{eq:T-boundary}
\end{align}
\end{claim}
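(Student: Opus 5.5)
The plan is to prove one weighted inequality that gives all three assertions at once:
\[
A(S,T)\ \ge\ \frac{\alpha}{D}\,e_G(S,Y)+\frac{\beta}{D}\,e_G(T,Y).
\]
The right-hand side is a sum of nonnegative terms, so it yields \eqref{eq:A-nonnegative}. Dropping either term and multiplying by $D/\alpha$ or $D/\beta$ gives \eqref{eq:S-boundary} and \eqref{eq:T-boundary}, respectively. The idea is to split each edge between $S$ and $T$ fractionally, using $D=\alpha+\beta$. A fraction $\alpha/D$ of the edge is charged to its endpoint in $S$, and a fraction $\beta/D$ to its endpoint in $T$.

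First, the $S$-side. For $u\in S$, all its neighbours lie in $S$, $T$ or $Y$, and $d_G(u)\le\Delta(G)\le D$. Hence $d_T(u)+d_Y(u)\le D$. Summing over $u\in S$ gives $e_G(S,T)+e_G(S,Y)\le D|S|$. Multiplying by $\alpha/D$ gives
\[
\alpha|S|\ge \frac{\alpha}{D}\bigl(e_G(S,T)+e_G(S,Y)\bigr).
\]

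Second, the $T$-side, which is the only step needing a small observation. For every $v$ with $d_G(v)\le D$ we have
\[
\min\{d_G(v),\beta\}\ge \frac{\beta}{D}\,d_G(v).
\]
If $d_G(v)\le\beta$, this holds because $\beta/D\le 1$. If $d_G(v)>\beta$, it holds because $d_G(v)/D\le 1$. Since $d_G(v)\ge d_S(v)+d_Y(v)$, summing over $v\in T$ gives
\[
\sum_{v\in T}\min\{d_G(v),\beta\}\ge\frac{\beta}{D}\bigl(e_G(S,T)+e_G(T,Y)\bigr).
\]

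Finally, add the two bounds and subtract $e_G(S,T)$, as in the definition \eqref{eq-3} of $A(S,T)$. The coefficient of $e_G(S,T)$ becomes $\alpha/D+\beta/D-1=0$, which leaves exactly the displayed weighted inequality. I expect no real obstacle. The only point to watch is that the hypothesis $\Delta(G)\le D$ is used on both sides: it bounds the total degree of vertices in $S$, and it makes the truncation $\min\{\cdot,\beta\}$ cost at most the factor $\beta/D$ on $T$.
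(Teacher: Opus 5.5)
Your proof is correct and takes the same approach as the paper. Both prove $D\,A(S,T)\ge\alpha e_G(S,Y)+\beta e_G(T,Y)$ using the bound $\min\{d_G(v),\beta\}\ge\frac{\beta}{D}d_G(v)$ on $T$ and the degree-sum bound $D|S|\ge\sum_{v\in S}d_G(v)$ on $S$, with the $e_G(S,T)$ terms cancelling because $D=\alpha+\beta$; the only cosmetic difference is that you discard the internal edges $e_G(S)$ and $e_G(T)$ at the start, whereas the paper keeps them until the final inequality.
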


\begin{proof}
Since $0\le d_G(v)\le D=\alpha+\beta$, we have
$\min\{d_G(v),\beta\}\ge\frac{\beta}{D}d_G(v)$
for $v\in V(G).$
Also, the sum of the degrees over $S$ satisfies
\[
D|S|\ge\sum_{v\in S}d_G(v)
=2e_G(S)+e_G(S,T)+e_G(S,Y).
\]
Using these inequalities and
$\sum_{v\in T}d_G(v)=2e_G(T)+e_G(S,T)+e_G(T,Y)$, we obtain
\begin{equation}\label{eq-1}
\begin{aligned}
DA(S,T)
&\overset{(\ref{eq-3})}{\ge}\beta\sum_{v\in T}d_G(v)+\alpha D|S|-D e_G(S,T)\\
&=\beta\bigl(2e_G(T)+e_G(T,Y)\bigr)
  +\alpha\bigl(D|S|-e_G(S,T)\bigr)\\
&\ge\alpha e_G(S,Y)+\beta e_G(T,Y)\ge0.
\end{aligned}
\end{equation}
Since $\alpha,\beta>0$, this proves \eqref{eq:A-nonnegative},
\eqref{eq:S-boundary}, and \eqref{eq:T-boundary}.
\end{proof}

We now apply Claim~\ref{clm:certificate-boundary} and
Corollary~\ref{lem:lovasz} to obtain the required edge partition.
Recall that $q_D(S,T)$ count the components $C$ of $G-(S\cup T)$ in which
every vertex has degree $D$ in $G$ and
$\alpha|V(C)|+e_G(V(C),T)$ is odd.

\begin{claim}\label{clm:bounded-partition}
The graph \(G\) has an edge partition
\[
E(G)=E(J)\sqcup E(K)
\]
such that
\[
\Delta(J)\le \alpha
\qquad\text{and}\qquad
\Delta(K)\le \beta.
\]
\end{claim}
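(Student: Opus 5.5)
We apply Corollary~\ref{lem:lovasz} with $\alpha=a-1$ and $\beta=n_1-1$. Since $\Delta(G)=D=\alpha+\beta$ by Claim~\ref{clm:max-degree-centers}, it suffices to verify $A(S,T)\ge q_D(S,T)$ for every pair of disjoint sets $S,T\subseteq V(G)$. This conclusion contradicts the last assertion of Claim~\ref{clm:first-step}, which is how the claim closes the case $b=1$. Put $Y=V(G)\setminus(S\cup T)$. If $q_D(S,T)=0$, the inequality is exactly \eqref{eq:A-nonnegative} of Claim~\ref{clm:certificate-boundary}, so we may assume that some component of $G[Y]$ is counted by $q_D$.

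The first step is a structural description of a counted component $C$. By Claim~\ref{clm:max-degree-centers}, every vertex of $C$ has degree $D$ and is therefore a center $c_i$ of $H_0$. The leaves of $H_0$ are not centers, so they have degree less than $D$ and cannot lie in $C$. Hence every $H_0$-leaf of a center in $C$ lies in $S\cup T$. Next we bound $n_i$ for such a center. From \eqref{eq:stability-decomposition},
\[
D=d_G(c_i)\le n_i+(a-1)+e(R),
\]
so \eqref{eq:remainder-bound} gives $n_i\ge n_1-1-\rho+k$. The computation in the proof of Claim~\ref{clm:max-degree-centers} shows $n_1-1-\rho\ge 2\alpha$. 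Therefore every counted component sends at least $2\alpha+1$ edges into $S\cup T$, and distinct counted components use disjoint sets of such edges. Summing over the counted components,
\[
(2\alpha+1)\,q_D(S,T)\le e_G(S,Y)+e_G(T,Y).
\]

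The second step turns these boundary edges into a lower bound for $A(S,T)$. The inequality chain \eqref{eq-1} gives
\[
D\,A(S,T)\ge \alpha e_G(S,Y)+\beta e_G(T,Y)+2\beta e_G(T)+\alpha\Bigl(D|S|-\sum_{v\in S}d_G(v)\Bigr).
\]
Here $\beta\ge 2\alpha$, because $n_1-1\ge 2(a-1)+\rho$. A purely global use of this inequality only yields $A\gtrsim \alpha q_D$ up to the factor $D$, so I would argue locally instead. For each counted component $C$ and a center $c\in C$, consider its at least $2\alpha+1$ leaf-neighbours $u\in S\cup T$. A leaf $u\in T$ has small degree (at most $a+e(R)$), so $\min\{d_G(u),\beta\}=d_G(u)$. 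That term of $A$ already pays for all of $u$'s edges, including the edge $uc$, with nothing subtracted except edges from $u$ to $S$. A leaf $u\in S$ contributes $\alpha$ to $A$, while $u$'s edges to $T$ subtract at most $d_G(u)-1\le a+e(R)-1$.

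The main obstacle is exactly this accounting. I would try to show that the slack $\min\{d_G(u),\beta\}-e_G(u,S)$ for $u\in T$, together with $\alpha-e_G(u,T)$ for $u\in S$ and the fact that the leaves of distinct centers are distinct, yields at least one unit of $A$ per counted component. Two cases seem to need extra care. The first is when all leaves of $c$ lie in $S$ and are joined heavily to $T$. I would handle it using $e(R)\le\rho-k$: the leaves have degree at most $a+e(R)$, and the $a-1$ matchings give each leaf at most $a-1$ edges to $T$. The second is the parity condition in $q_D$, which I would use only if the counting falls short by exactly one unit.
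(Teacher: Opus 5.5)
Your first step is correct, but there is a genuine gap at the decisive point: you never establish $A(S,T)\ge q_D(S,T)$ when $q_D(S,T)\ge1$. Your bound of $2\alpha+1$ boundary edges per counted component, fed into \eqref{eq-1}, only gives $D\,A(S,T)\ge\alpha(2\alpha+1)\,q_D(S,T)$. This does not suffice when $\beta>2\alpha^2$ and $q_D(S,T)$ is large. The ``local accounting'' meant to replace it is only a plan. You leave open the case of leaves in $S$, which can lose up to $\alpha+e(R)$ against their contribution $\alpha$. You do not treat vertices of $T$ of degree above $\beta$, which pay only $\beta$. And you hold the parity condition in reserve without knowing whether it is needed. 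The real weakness is earlier: the bound $n_i\ge n_1-1-\rho+k$ charges the whole remainder $e(R)$ to every counted component. The actual deficit at a center is only $d_R(c_i)$, and these deficits are bounded \emph{in total}, not per component.

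The missing idea is a global comparison. Argue by contradiction with $A(S,T)\le r-1$, where $r=q_D(S,T)$.

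Upper bound: since $\beta\ge\alpha$, \eqref{eq-1} gives $e_G(S,Y)+e_G(T,Y)\le\frac{D}{\alpha}(r-1)$. Moreover $D/\alpha\le 1+\beta/2<\beta$, because $\alpha\ge2$ and $\beta\ge2a\ge6$.

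Lower bound: pick one vertex from each counted component. These are $r$ centers $c_i$ whose $H_0$-stars lie entirely in the boundary. Each satisfies $n_i\ge\beta-d_R(c_i)$, since $D=d_G(c_i)\le(a-1)+n_i+d_R(c_i)$. They lie in distinct components of $G[Y]$, so they are pairwise nonadjacent. Hence their $R$-degrees sum to at most $e(R)\le\rho-k\le\beta-1$. This gives at least $\beta r-\beta+1>\beta(r-1)$ boundary edges, a contradiction, with no parity argument needed.

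The paper reaches the same contradiction slightly differently. It first shows that some counted component has more than $D$ vertices; otherwise each would send at least $D$ edges out, exceeding $\frac{D}{\alpha}(r-1)$. So there are at least $D+r$ such centers, and then the cruder estimate $\sum d_R(c_i)\le2e(R)\le2(\beta-1)$ already suffices.
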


\begin{proof}
Suppose that no such partition exists.
Since $\Delta(G)=D$, Corollary~\ref{lem:lovasz} implies that
$A(S,T)<q_D(S,T)$ for some disjoint sets $S,T$.
Put $r=q_D(S,T)$ and $Y=V(G)\setminus(S\cup T)$.
By Claim~\ref{clm:certificate-boundary} and integrality, we have
\begin{equation}\label{eq:A-defect}
0\le A(S,T)\le r-1,
\end{equation}
so $r\ge1$.
Since $n_q=1$, and Claim~\ref{clm:positive-gaps} ensures that
at least two indices satisfy $n_i-n_{i+1}\ge a$, we have
\begin{equation}\label{eq:beta-large}
\beta=n_1-1=\sum_{i=1}^{q-1}(n_i-n_{i+1})\ge2a.
\end{equation}
By \eqref{eq:beta-large} and $a\ge3$, we have
\(\beta\ge2a\ge a-1=\alpha\). Hence \eqref{eq-1} gives
\[
\begin{aligned}
\alpha\bigl(e_G(S,Y)+e_G(T,Y)\bigr)
&\le\alpha e_G(S,Y)+\beta e_G(T,Y)\\
&\le DA(S,T).
\end{aligned}
\]
Dividing by $\alpha>0$ and using \eqref{eq:A-defect}, we obtain
\begin{equation}\label{eq:total-boundary}
e_G(S,Y)+e_G(T,Y)
\le\frac D{\alpha} A(S,T)\le\frac D{\alpha}(r-1).
\end{equation}

Consider the $r$ components counted by $q_D(S,T)$.
Every vertex of each such component has degree $D$ in $G$.
If every such component has at most \(D\) vertices, then for each such component \(C\),
\[
e_G(V(C),S\cup T)=D|V(C)|-2e(C)
\ge |V(C)|(D-|V(C)|+1)\ge D.
\]
Summing over these $r$ components and using
\eqref{eq:total-boundary} and $\alpha\ge2$,
we obtain
\[
rD\le \frac{D(r-1)}{\alpha}\leq D(r-1)<rD,
\]
a contradiction.
Thus at least one of these components has at least $D+1$ vertices.
Since each of the other $r-1$ components has at least one vertex,
the $r$ components contain at least $D+r$ vertices in total.

By Claim~\ref{clm:max-degree-centers}, every vertex in these components
is a center of $H_0$.
Every leaf of a star whose center lies in one of these components belongs to \(S\cup T\). Indeed, otherwise it would lie in the same counted component as its center, would have degree \(D\), and hence would also be a center, a contradiction.
For each center $c_i$ in these components, the edge partition
\eqref{eq:stability-decomposition} in Claim~\ref{clm:unit-tail} gives
\[
d_G(c_i)=\sum_{j=1}^{a-1}d_{M_j}(c_i)+d_{H_0}(c_i)+d_R(c_i).
\]
Each $M_j$ is a matching, so $d_{M_j}(c_i)\le1$.
Since $d_{H_0}(c_i)=n_i$ and $d_G(c_i)=D$, we obtain
$D\le a-1+n_i+d_R(c_i)$.
Thus $n_i\ge D-(a-1)-d_R(c_i)=\beta-d_R(c_i)$.
There are at least $D+r$ such centers. Their corresponding stars are
vertex-disjoint, with centers in $Y=V(G)\setminus(S\cup T)$ 
and leaves in \(S\cup T\).
Hence their edges contribute \(\sum n_i\) distinct edges to
\(e_G(S,Y)+e_G(T,Y)\), where the sum is over these centers.

Summing \(n_i\ge\beta-d_R(c_i)\) over these centers,
and using
that there are at least \(D+r\) of them and that
\(\sum d_R(c_i)\le2e(R)\), we obtain
\begin{equation}\label{eq:boundary-lower}
e_G(S,Y)+e_G(T,Y)
\ge\beta(D+r)-2e(R).
\end{equation}
For each \(i\), we have
\[
\max\{n_i-n_{i+1}-a+1,0\}\le n_i-n_{i+1}.
\]
Thus, by the definition of \(\rho\) and \(n_q=1\),
\[
\rho\le\sum_{i=1}^{q-1}(n_i-n_{i+1})
=n_1-n_q=n_1-1=\beta.
\]
Since \(k\ge1\), it follows from \eqref{eq:remainder-bound} that
$e(R)\le \rho-k\le \beta-1.$
Therefore \eqref{eq:boundary-lower} implies
\[
e_G(S,Y)+e_G(T,Y)\ge\beta(D+r-2)+2>\beta(r-1).
\]
On the other hand, $D/\alpha=1+\beta/\alpha\le1+\beta/2<\beta$
because $\alpha=a-1\ge2$ and $\beta\ge6$ by \eqref{eq:beta-large}.
Together with \eqref{eq:total-boundary}, this yields
$e_G(S,Y)+e_G(T,Y)\le\beta(r-1)$, a contradiction.
\end{proof}

By Claim~\ref{clm:bounded-partition}, there exist subgraphs $J$ and $K$ such that
\[
E(G)=E(J)\sqcup E(K),\qquad
\Delta(J)\le \alpha=a-1,\qquad
\Delta(K)\le \beta=n_1-1.
\]
However, Claim~\ref{clm:first-step} rules out exactly such an edge partition in a remaining counterexample.
Hence no remaining counterexample exists at this
induction step. The base case has already been verified, and the same argument
applies to all admissible triples. 
Thus Theorem~\ref{thm:main} follows by
strong induction.
\end{proof}

\section{Concluding remarks}

We have proved the conjectured formula in the case where one of the two star forests has exactly two components. Lemma~\ref{lem:one-star} gives a lower bound for the relation obtained from Lemma~\ref{lem:deletion} whose target is a single star. The proof uses both vertex deletion relations, together with induction, matching arguments, and factor arguments.

Suppose that both star forests have at least three components.
The vertex deletion relations in Lemma~\ref{lem:deletion} then
have at least two components in each target. If one of these
targets has exactly two components, then Theorem~\ref{thm:main}
gives the corresponding size Ramsey number. It remains to extend
the lower bound argument to arbitrary pairs of star forests each
having at least three components.

\bigskip\noindent
\textbf{Declaration on the use of AI.}
The author used ChatGPT to assist in exploring proof strategies and improving exposition. All mathematical arguments, results, and conclusions were verified by the author, who takes full responsibility for them.

\end{document}